\newtheorem{theorem}{Theorem}
\theoremstyle{plain}
\newtheorem{definition}{Definition}
\newtheorem{lemma}{Lemma}
\newtheorem{proposition}{Proposition}
\newtheorem{remark}{Remark}
\numberwithin{equation}{section}
\begin{document}
\title[A sub-super solution method for a class of nonlocal problems... ]{A sub-super solution method for a class of nonlocal problems involving the $p(x)-$Laplacian operator and applications}
\subjclass[2010]{Primary: 35J60; Secondary:  35Q53}
\keywords{fixed point arguments, nonlocal problems, $p(x)-$Laplacian, sub-supersolution. }

\begin{abstract}
In the present paper we study the existence of solutions for some nonlocal problems involving the $p(x)-$Laplacian operator. The approach is based on a new sub-supersolution method. 
\end{abstract}

\author{Gelson C.G. dos Santos}
\address{Gelson C.G. dos Santos\\
Universidade Federal do Par\'{a}, Faculdade de Matem\'{a}tica, CEP: 66075-110 Bel\'{e}m-PA, Brazil.}
\email{cgelson@ymail.com}
\author{Giovany M. Figueiredo}
\address{Giovany M. Figueiredo \\
Universidade de Brasília, Departamento de Matem\'{a}tica , CEP: 70910-900 Brasília-DF, Brazil .}
\email{giovany@unb.br}
\author{Leandro da S. Tavares}
\address{Leandro da S. Tavares\\
Universidade Federal do Cariri, Centro de Ci\^{e}ncias e Tecnologia, CEP:63048-080, Juazeiro do Norte-CE,
Brazil.}
\email{leandro.tavares@ufca.edu.br}
\maketitle

\section{Introduction}
In this work we are interested in the nonlocal problem
$$
\left\{\begin{array}{rcl}\label{problema-(P)}
-\mathcal{A}(x,|u|_{L^{r(x)}})\Delta_{p(x)} u&=&f(x,u)|u|_{L^{q(x)}}^{\alpha(x)}+g(x,u)|u|_{L^{s(x)}}^{\gamma(x)} \;\;\mbox{in} \;\;\Omega,\\
\vspace{.2cm}
u&=&0\;\;\mbox{on}\;\;\partial\Omega,
\end{array}
\right. \leqno{(P)}
$$
]where $\Omega$ is a bounded domain in $\mathbb{R}^{N} (N > 1)$ with  $C^{2}$  boundary, $|.|_{L^m(x)}$ is the norm of the space $L^{m(x)}(\Omega),$ $-\Delta_{p(x)}u:=-div(|\nabla u|^{p(x)-2}\nabla u)$ is the  $p(x)-$Laplacian operator, $r,q, s,\alpha,\gamma:\Omega\rightarrow[0,\infty)$ are measurable functions and $\mathcal{A},f,g:\overline{\Omega}\times\mathbb{R}\rightarrow\mathbb{R}$ are continuous functions satisfying certain conditions.

In the last decades several works related to the  $p(x)-$Laplacian operator arose, see for instance \cite{acerbi-mingione, fan-zao, fan-regular, fan-zang, fan, Fan, Liu} and the references therein. Partial differential equations involving the $p(x)-$Laplacian arise, for instance, in nonlinear elasticity, fluid mechanics, non-Newtonian fluids and image processing. See for instance \cite{acerbi-mingione, chen-levine, ruzicka} and the references therein for more informations.

The nonlocal term $|.|_{L^{m(x)}}$ with the condition $p(x)=r(x)\equiv2$  is considered in the well know Carrier's equation
$$\rho u_{tt}-a(x,t,|u|_{L^{2}}^{2})\Delta u=0$$
that models the vibrations of a elastic string  when the variation of the tensions are not too small. See \cite{carrier} for more details. The same nonlocal term arises also in Population Dynamics, see \cite{chipot-chang1, chipot-lovat} and its references.

In the litarature there are several works related to  $(P)$ but with  $p(x)\equiv p$ ($p$  constant), see for instance \cite{alves-covei, chipot-chang1, chipot-chang2,chipot-chang3, chipot-correa, chipot-lovat, Chipot-Lovat, chipot-molinet, chipot-rodrigues, chipot-savitska1, chipot-savitska2, chipot-valente-caffarelli, correa-figueiredo, correa-menezes, correa-antonio, gel-giovany, yan-wang}. For example in  \cite{correa-figueiredo} the authors used a sub-supersolution argument  to study the nonlocal problem
$$\left\{\begin{array}{rcl}
-\Delta_{p} u=|u|_{L^{q(x)}}^{\alpha(x)}\;\;\mbox{em}\;\;\Omega,\\
\vspace{.2cm}
u=0\;\;\mbox{sobre}\;\;\partial\Omega.
\end{array}
\right.$$

In \cite{alves-covei}, the authors used an abstract  sub-super solution Theorem whose proof is chiefly based on a version of the Minty-Browder  Theorem for pseudomonotone operators to study the problem
$$
\left\{\begin{array}{rcl}
-a(\int_{\Omega}|u|^{q})\Delta u&=&h_{1}(x,u)f(\int_{\Omega}|u|^{p})+h_{2}(x,u)g(\int_{\Omega}|u|^{r})\;\;\mbox{in}\;\;\Omega,\\
\vspace{.2cm}
u&=&0\;\;\mbox{on}\;\;\partial\Omega,
\end{array}
\right. 
$$
where $h_{i}:\overline{\Omega}\times\mathbb{R}^{+}\rightarrow\mathbb{R}$ are continuous functions, $q,p,r\in[1,\infty)$ are constants and the functions $a,f,g:[0,\infty)\rightarrow\mathbb{R}^{+}$ with  $ f,g \in L^{\infty}([0,\infty))$ and
$$\begin{array}{rcl}
a(t), f(t), g(t)\geq a_{0}>0
\end{array}
$$
for all $t \in[0,\infty)$ where $a_0$ is a constant.

Recently in \cite{yan-wang}, the authors studied the existence and multiplicity of solutions to the problem
$$\left\{\begin{array}{rcl}\label{Y.W}
-a(\int_{\Omega}|u|^{\gamma})\Delta u=f_{\lambda}(x,u)\;\;\mbox{em}\;\;\Omega,\\
\vspace{.2cm}
u=0\;\;\mbox{sobre}\;\;\partial\Omega,
\end{array}
\right.
$$
where $\gamma\in(0,\infty),$ $a(t)\geq a_{0}>0$ where $a_0$ is a constant and $f_{\lambda}:\Omega\times\mathbb{R}\rightarrow\mathbb{R}$ are continuous functions with $f_{\lambda}$ depending on the parameter $\lambda.$

In \cite{chipot-correa}, the authors used the Schauder's Fixed Point Theorem to study the boundary value problem
$$\left\{\begin{array}{rcl}
-\mathfrak{A}(x,u)\Delta u=\lambda f(u)\;\;\mbox{em}\;\;\Omega,\\
\vspace{.2cm}
u=0\;\;\mbox{sobre}\;\;\partial\Omega,
\end{array}
\right. 
$$
where $f\in C^{1}([0,\theta],\mathbb{R}),$ $f(0)=0=f(\theta),$  $f'(0)>0,$ $f(t)>0$ in $(0,\theta),$ the function $\mathfrak{A}:\Omega\times L^{p}(\Omega)\rightarrow\mathbb{R}$ is such that the mapping $x\mapsto\mathfrak{A}(x,u)$ is measurable for all $u \in L^{p}(\Omega)$ and the function $u\mapsto\mathfrak{A}(x,u)$ is continuous from $L^{p}(\Omega)$ into $\mathbb{R}$ for almost all $x \in \Omega.$ They also considered that  there are constants $a_{0},a_{\infty}>0$  such that
$$\begin{array}{rcl}
a_{0}\leq\mathfrak{A}(x,u)\leq a_{\infty}\;\mbox{a.e}\; \mbox{in} \; \Omega
\end{array}
$$
for all $u\in L^{p}(\Omega).$

Recently in \cite[Theorem 1]{gel-giovany} the first two authors considered the problem $(P)$ for  $p(x)\equiv2$ (i.e, $-\Delta_{p(x)}=-\Delta$). They proved a sub-super solution theorem for $(P)$ and applied such result in three problems. Specifically they considered a sublinear problem, a concave-convex problem and a logistic equation. Their arguments are chiefly based on the existence of the first eigenvalue of the Laplacian operator $(-\Delta, H^{1}_{0}(\Omega))$. The $p(x)-$Laplacian operator, in general,  has no first eigenvalue, that is, the infimum of the eigenvalues  equals 0 (see \cite{fan-eigen}).

The lack of the existence of the first eigenvalue implies a considerable difficult to consider boundary values problems  involving the $p(x)-$Laplacian by using sub-supersolution methods. Papers that consider such problems by using the mentioned method  are rare in the literature. Among  such works we want to mention the papers \cite{alves-moussaoui-tavares, alves-moussaoui,Liu, ying-yang, zhang}.

The main goal of this paper is to generalize \cite[Theorem 1]{gel-giovany} for the  $p(x)-$Laplacian operator and the three applications of  \cite{gel-giovany}. Below we describe the main points regarding the generalization of the results in \cite{gel-giovany}.

\begin{itemize}
	
	\item[(i)] In \cite{gel-giovany} the homogeneity of $(-\Delta,H^{1}_{0}(\Omega)) $ and the  eigenfunction associated to the first eigenvalue  $\lambda_{1}$ are used to construct a subsolution. Differently from the  operator $(-\Delta_{p}, W^{1,p}_{0}(\Omega))$, the  $p(x)-$Laplacian operator is not homogeneneous. Another important point is the problem of the existence of the first eigenvalue of the  operator $\big(-\Delta_{p(x)}, W_{0}^{1,p(x)}(\Omega)\big)$. To avoid these problems we used some arguments contained in \cite{Liu};

\item[(ii)] We present weaker conditions on the exponents $r,q, s,\alpha$ and $\gamma;$

\item[(iii)] As an application of Theorem \ref{theorem-to-(P)} we prove the existence of a positive solution  for some nonlocal problems which generalize the three problems considered in \cite[Theorem 1]{gel-giovany};

\item[(iv)] As in \cite[Theorem 1]{gel-giovany} and differently from several works that consider the nonlocal term  $\mathcal{A}(x,|u|_{L^{r(x)}})$ satisfying $\mathcal{A}(x,t)\geq a_{0}>0$ (where $a_0$ is a constant) our  Theorem \ref{theorem-to-(P)} allow us to study  $(P)$ in the mentioned case  and in situations where $\mathcal{A}(x,0)=0.$
\end{itemize}

In this work we will assume that the functions $r,p,q, s,\alpha$ and $\gamma$ satisfy the hipothesis below
\begin{itemize}
	\item[($H_{0}$)]$p\in C^{1}(\overline{\Omega}), r,q,s\in L_{+}^{\infty}(\Omega)$ and $\alpha,\gamma\in L^{\infty}(\Omega)$ satisfy
	$$1<p^{-}:=\inf_{\Omega} p(x)\leq p^{+}:=\sup_{\Omega} p(x)<N\;\;\text{e}\;\;\alpha(x),\gamma(x)\geq0\;\text{a.e in}\;\Omega.$$
\end{itemize}

Now we present our main result:

\begin{theorem}\label{theorem-to-(P)}
	Suppose that  $r,p,q,s,\alpha$ and $\gamma$ satisfy  $(H_{0}),$ $(\underline{u},\overline{u})$ is a pair of sub-super solution for  $(P)$ with $\underline{u}>0\;\text{a.e in}\;\Omega,$ $f(x,t),g(x,t)\geq0$ in $\overline{\Omega}\times[0, |\overline{u}|_{L^{\infty}}]$ are continuous functions and $\mathcal{A}:\overline{\Omega}\times(0,\infty)\rightarrow\mathbb{R}$ is continuos with
	$\mathcal{A}(x,t)>0\;\text{in}\;\overline{\Omega}\times\bigl[|\underline{u}|_{L^{r(x)}},\;|\overline{u}|_{L^{r(x)}}\bigl].$
	Then $(P)$ has a weak positive solution $u\in[\underline{u},\overline{u}].$
\end{theorem}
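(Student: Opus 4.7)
The plan is to apply Schauder's fixed point theorem on the order interval
$K = \{w \in C(\overline{\Omega}) : \underline{u} \leq w \leq \overline{u} \text{ in } \Omega\}$,
after freezing the nonlocal terms. For each $w \in K$, I would consider the local auxiliary problem $(P_w)$ obtained from $(P)$ by replacing every occurrence of $u$ inside the nonlocal norms $|\cdot|_{L^{r(x)}}$, $|\cdot|_{L^{q(x)}}$, $|\cdot|_{L^{s(x)}}$ by the frozen function $w$. Since $0 < \underline{u} \leq w \leq \overline{u}$, the quantity $|w|_{L^{r(x)}}$ takes values in the compact range $[|\underline{u}|_{L^{r(x)}},\,|\overline{u}|_{L^{r(x)}}]$ on which $\mathcal{A}(x,\cdot) > 0$ is continuous; hence the diffusion coefficient $\mathcal{A}(x, |w|_{L^{r(x)}})$ is uniformly bounded above and below by positive constants as $w$ ranges over $K$ and $x$ over $\overline{\Omega}$.

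Next I would verify that $(\underline{u}, \overline{u})$ remains a sub-super solution pair for the frozen problem $(P_w)$ in the appropriate $p(x)$-Laplacian sense, and then invoke a sub-super solution theorem for local $p(x)$-Laplacian problems---along the lines of the arguments developed in \cite{Liu}---to obtain a weak solution $u_w$ of $(P_w)$ satisfying $\underline{u} \leq u_w \leq \overline{u}$. Setting $T(w) := u_w$ (or the minimal such solution, to render $T$ single-valued) yields a self-map of $K$, whose fixed points are exactly the sought weak solutions of $(P)$ in $[\underline{u}, \overline{u}]$. To close the argument via Schauder, I would verify continuity and compactness of $T$ in the $C(\overline{\Omega})$-topology: compactness follows from $C^{1,\beta}$-regularity for the $p(x)$-Laplacian (a uniform bound on $u_w$ in $C^{1,\beta}(\overline{\Omega})$ yields precompactness in $C(\overline{\Omega})$), while continuity reduces to passing to the limit in the weak formulation, using uniform positivity of $\mathcal{A}(\cdot, |w|_{L^{r(x)}})$ and continuity of $w \mapsto |w|_{L^{m(x)}}$ for measurable $m$.

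The main obstacle is the construction of the frozen solution $u_w$ inside $[\underline{u}, \overline{u}]$: the $p(x)$-Laplacian is inhomogeneous and, as emphasized in the introduction, has no first eigenvalue, so the eigenfunction-based construction from \cite{gel-giovany} does not carry over. One must therefore build a suitable subsolution by the more delicate technique adapted from \cite{Liu}, and this is the technical heart of the argument. A secondary difficulty is the continuity of $T$, since $w$ enters $(P_w)$ nonlinearly through both the diffusion coefficient and the right-hand side; but the uniform $C^{1,\beta}$-bounds on $u_w$ together with dominated-convergence type arguments for the Luxemburg norm should make this manageable.
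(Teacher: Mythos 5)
Your overall strategy --- freeze, solve, close with a fixed point theorem --- is in the right family, but the specific two-level scheme you propose has a genuine gap at the continuity step. By freezing only the nonlocal norms, your auxiliary problem $(P_w)$ is still a nonlinear local equation
$-\mathcal{A}(x,|w|_{L^{r(x)}})\Delta_{p(x)}u=f(x,u)|w|_{L^{q(x)}}^{\alpha(x)}+g(x,u)|w|_{L^{s(x)}}^{\gamma(x)}$,
and a local sub-supersolution theorem only gives you a (generally non-unique) solution $u_w\in[\underline{u},\overline{u}]$. Selecting the minimal solution makes $T$ single-valued, but minimal solutions do not depend continuously on the data: if $w_n\to w$, your uniform $C^{1,\beta}$ bounds yield only that a subsequence of $u_{w_n}$ converges to \emph{some} solution of $(P_w)$ in the order interval, not to the minimal one. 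Since neither continuity of this selection nor convexity of the solution sets (which a multivalued fixed point theorem would require) is available, Schauder's theorem cannot be applied to $T$ as you define it. You also misplace the ``technical heart'': for Theorem \ref{theorem-to-(P)} the pair $(\underline{u},\overline{u})$ is given by hypothesis, so no construction in the style of \cite{Liu} is needed here; that construction is only needed in the applications of Section 4, where concrete sub- and supersolutions must be produced.

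The paper avoids the selection problem by truncating the local nonlinearity as well: it introduces the cut-off $Tu\in[\underline{u},\overline{u}]$ and the operator $H(Tv)$, which packages the \emph{entire} right-hand side (including $f(x,Tv)$ and $g(x,Tv)$) into a fixed bounded function of $x$ once $v$ is given. The auxiliary problem $-\Delta_{p(x)}u=H(Tv)$ then has a unique solution by \cite[Theorem 4.2]{fan-zhang}, so the solution operator $S$ is automatically single-valued, and its compactness and continuity are obtained on $L^{p(x)}(\Omega)$ by elementary energy estimates rather than $C^{1,\beta}$ regularity. A fixed point is produced by Schaefer's theorem, and only afterwards is the localization $u\in[\underline{u},\overline{u}]$ recovered, by subtracting \eqref{eq1.4} from \eqref{eq1.1} with $w=Tu$, testing against $(\underline{u}-u)_{+}$, and using $f,g\geq0$ together with $|\underline{u}|_{L^{q(x)}}\leq|Tu|_{L^{q(x)}}$; this is exactly the step where the paper's nonstandard definition of sub-supersolution (with inequalities holding for all $w\in[\underline{u},\overline{u}]$) is used. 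To salvage your two-level structure you would need a continuous-selection result for the frozen problems, which is precisely the difficulty the full truncation is designed to eliminate.
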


\section{Preliminaries: The spaces $L^{p(x)}(\Omega),$ $W^{1,p(x)}(\Omega)$ and $W_{0}^{1,p(x)}(\Omega)$}

In this section we will point some facts regarding to the spaces $L^{p(x)}(\Omega),$  $W^{1,p(x)}(\Omega)$ and $W_{0}^{1,p(x)}(\Omega)$ that will be often used in this work.  For more information see Fan-Zhang \cite{fan} and the references therein.

Let $\Omega\subset I\!\!R^{N} ( N \geq 1)$ be a bounded domain. We define the space
$$L_{+}^{\infty}(\Omega)=\left\{ m\in L^{\infty}(\Omega)\;\text{with}\; ess \inf m(x)\geq1\right\}.$$

Given $p\in L_{+}^{\infty}(\Omega)$ we define the generalized  Lebesgue space

$$
L^{p(x)}(\Omega )=\left\{u\in\mathcal{S}(\Omega): \int_{\Omega}|u(x)|^{p(x)}dx<\infty \right\},
$$
where $\mathcal{S}(\Omega):=\biggl\{u:\Omega\rightarrow\mathbb{R}: u\; \text{is measurable}\biggl\}$.\\

We define in $L^{p(x)}(\Omega)$ the norm

$$
|u|_{p(x)}:=\inf \left\{\lambda>0;\; \int_{\Omega}\left|\frac{u(x)}{\lambda}\right|^{p(x)}dx\leq 1\right\}.
$$
The space $(L^{p(x)}(\Omega), |.|_{L^{p(x)}})$ is a Banach space.

Given $m\in L^{\infty}(\Omega)$ we define
$$
m^{+}:=ess \sup_{\Omega}m(x)\;\;\text{e}\;\;m^{-}:=ess \inf_{\Omega}m(x).
$$

\begin{proposition}\label{norm-property} Define the quantity $\rho(u): =\int_{\Omega}|u|^{p(x)}dx.$ For all $u, u_{n}\in L^{p(x)}(\Omega), n \in \mathbb{N}$  the following assertions hold
	
	\noindent {\bf (i)} Let $u\neq 0$ in $L^{p(x)}(\Omega)$. Then $|u|_{L^{p(x)}}=\lambda\Leftrightarrow \rho(\frac{u}{\lambda})=1,$
	
	\noindent {\bf (ii)} If $|u|_{L^{p(x)}}< 1 \;(= 1;\;> 1)$  then $\rho(u) < 1\; (= 1;\;> 1),$
	
	\noindent {\bf (iii)} If $|u|_{L^{p(x)}}> 1 $ then $|u|_{L^{p(x)}}^{p^{-}}\leq\rho(u)\leq|u|_{L^{p(x)}}^{p^{+}},$
	
	\noindent {\bf (iv)} If $|u|_{L^{p(x)}}< 1 $ then $|u|_{L^{p(x)}}^{p^{+}}\leq\rho(u)\leq|u|_{L^{p(x)}}^{p^{-}},$
	
	\noindent {\bf (v)} $|u_{n}|_{L^{p(x)}}\rightarrow0\Leftrightarrow\rho(u_{n})\rightarrow0\;$ and $\;|u_{n}|_{L^{p(x)}}\rightarrow\infty\Leftrightarrow\rho(u_{n})\rightarrow\infty.$
\end{proposition}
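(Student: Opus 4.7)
The plan is to reduce the nonlocal problem $(P)$ to a family of purely local $p(x)$-Laplacian boundary value problems by a freezing argument, and then to produce a solution of $(P)$ as a fixed point of the associated map. Concretely, for each $v$ in the order interval $[\underline{u},\overline{u}]$, I would freeze the nonlocal quantities and consider the auxiliary local problem
\[
-\mathcal{A}(x,|v|_{L^{r(x)}})\,\Delta_{p(x)}u = f(x,u)\,|v|_{L^{q(x)}}^{\alpha(x)}+g(x,u)\,|v|_{L^{s(x)}}^{\gamma(x)} \text{ in }\Omega,\quad u=0 \text{ on }\partial\Omega.
\]
Since the Luxemburg norm is monotone on nonnegative functions, $v\in[\underline{u},\overline{u}]$ forces $|v|_{L^{r(x)}}\in\bigl[|\underline{u}|_{L^{r(x)}},|\overline{u}|_{L^{r(x)}}\bigr]$, and the hypothesis on $\mathcal{A}$ then guarantees that the coefficient $a_v(x):=\mathcal{A}(x,|v|_{L^{r(x)}})$ is continuous and bounded away from zero on $\overline{\Omega}$; in particular the frozen operator is a standard coercive weighted $p(x)$-Laplacian. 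Combining the sub-super solution hypothesis on $(P)$ with the nonnegativity of $f,g$, I would argue that $\underline{u}$ and $\overline{u}$ remain, respectively, a sub- and a super-solution of the frozen problem.

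Having set up the auxiliary problem, the next step is to invoke a classical sub-supersolution principle for the local $p(x)$-Laplacian, in the spirit of the pseudo-monotone operator approach of \cite{Liu}, to produce a weak solution $u_v\in[\underline{u},\overline{u}]$ of the frozen problem, and then define $\Phi(v):=u_v$ (selecting, say, the minimal such solution to make $\Phi$ single-valued, or alternatively truncating the nonlinearities into $[\underline{u},\overline{u}]$ to bypass uniqueness). One then verifies that $\Phi$ maps a closed convex subset of $C^0(\overline{\Omega})$ into itself, is continuous, and is compact: continuity is a consequence of the continuous dependence of $a_v$, $f(\cdot,\cdot)$, $g(\cdot,\cdot)$ and the nonlocal factors $|v|_{L^{m(x)}}$ on $v$ (using Proposition \ref{norm-property}), while compactness follows from uniform $C^{1,\alpha}$ regularity estimates for $p(x)$-Laplacian equations with bounded right-hand side and positive continuous weight. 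Schauder's fixed point theorem then delivers $u=\Phi(u)\in[\underline{u},\overline{u}]$, which is the desired positive weak solution of $(P)$.

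The main obstacle is exactly the one flagged in the introduction: the $p(x)$-Laplacian lacks both homogeneity and a first eigenvalue, so the constructions used for $-\Delta$ in \cite{gel-giovany} do not transfer verbatim, and the sub-supersolution step for the frozen local problem cannot rely on eigenfunction scaling. The technical core of the argument is therefore concentrated in two places: (i) the production of a solution to the frozen problem inside $[\underline{u},\overline{u}]$, where one must adapt the Galerkin/pseudo-monotone machinery of \cite{Liu} to handle the nonconstant but positive coefficient $a_v$; and (ii) establishing the compactness of $\Phi$, for which uniform a priori bounds independent of $v\in[\underline{u},\overline{u}]$ are required. The strict positivity of $\mathcal{A}$ on $\overline{\Omega}\times\bigl[|\underline{u}|_{L^{r(x)}},|\overline{u}|_{L^{r(x)}}\bigr]$ is precisely what prevents degeneration of the frozen operator and makes these estimates uniform.
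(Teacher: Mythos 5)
Your proposal does not address the statement at hand. Proposition \ref{norm-property} is an elementary, standard collection of relations between the Luxemburg norm $|u|_{L^{p(x)}}$ and the modular $\rho(u)=\int_{\Omega}|u|^{p(x)}dx$; the paper states it without proof as background on the spaces $L^{p(x)}(\Omega)$, citing Fan--Zhang \cite{fan}. What you have written instead is an outline for the paper's main result, Theorem \ref{theorem-to-(P)}: freezing the nonlocal coefficients, solving an auxiliary local problem in the order interval $[\underline{u},\overline{u}]$, and applying a fixed point theorem. That outline, whatever its merits for Theorem \ref{theorem-to-(P)}, says nothing about the modular--norm relations, and it even invokes Proposition \ref{norm-property} as a tool, which would be circular if it were intended as a proof of that proposition.

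For the record, the intended argument for Proposition \ref{norm-property} is the classical one. Item (i) follows from the definition of $|u|_{L^{p(x)}}$ as the infimum of those $\lambda>0$ with $\rho(u/\lambda)\leq 1$, together with the continuity and strict monotonicity of $\lambda\mapsto\rho(u/\lambda)$ on $(0,\infty)$ when $u\neq 0$. Items (ii)--(iv) follow from (i) and the pointwise inequalities $\lambda^{p^{-}}|w|^{p(x)}\leq|\lambda w|^{p(x)}\leq\lambda^{p^{+}}|w|^{p(x)}$ valid for $\lambda\geq 1$ (and reversed for $0<\lambda\leq 1$), applied with $\lambda=|u|_{L^{p(x)}}$ and $w=u/\lambda$. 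Item (v) is then an immediate consequence of (iii) and (iv). Should you actually be after Theorem \ref{theorem-to-(P)}, note that the paper's route also differs from yours: it truncates $u$ into $[\underline{u},\overline{u}]$ via an operator $T$, solves the single auxiliary problem $-\Delta_{p(x)}u=H(Tv)$ with frozen right-hand side, and applies Schaefer's fixed point theorem in $L^{p(x)}(\Omega)$, so no sub-supersolution principle for a family of frozen problems is needed.
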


\begin{theorem}
	Let $p,q\in L_{+}^{\infty}(\Omega)$. The following statements hold
	
	\noindent {\bf (i)} If $p^{-}>1$ and $\frac{1}{q(x)}+\frac{1}{p(x)}=1$ a.e in $\Omega$ then
	$
	\left|\int_{\Omega}uvdx\right|\leq \bigl(\frac{1}{p^{-}}+\frac{1}{q^{-}}\bigl)|u|_{L^{p(x)}}|v|_{L^{q(x)}}.
	$
	
	\noindent {\bf (ii)} If $q(x)\leq p(x),\;\text{a.e in}\;\Omega$ and $|\Omega|<\infty$  then $L^{p(x)}(\Omega)\hookrightarrow L^{q(x)}(\Omega).$
\end{theorem}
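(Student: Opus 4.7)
The plan is to apply Schauder's fixed point theorem to an operator that ``freezes'' the nonlocal ingredients. Take $X = C^1(\overline{\Omega})$ (the space in which standard $p(x)$-Laplacian regularity delivers compactness) and the closed convex subset
$$K = \{v \in X : \underline{u} \leq v \leq \overline{u} \text{ in } \Omega\}.$$
For each $v \in K$, the nonlocal quantities $|v|_{L^{r(x)}}$, $|v|_{L^{q(x)}}$, $|v|_{L^{s(x)}}$ are positive real numbers, and I would define $T(v) := u$ as the unique weak solution of the local problem
$$-\mathcal{A}(x, |v|_{L^{r(x)}}) \Delta_{p(x)} u = f(x, v(x)) |v|_{L^{q(x)}}^{\alpha(x)} + g(x, v(x)) |v|_{L^{s(x)}}^{\gamma(x)} \text{ in } \Omega, \quad u = 0 \text{ on } \partial\Omega.$$
Well-posedness of $T(v)$ follows from the Minty--Browder theorem: monotonicity of the Luxemburg norm with respect to pointwise order places $|v|_{L^{r(x)}}$ in the compact interval $\bigl[|\underline{u}|_{L^{r(x)}},\,|\overline{u}|_{L^{r(x)}}\bigr]$, so the hypothesis $\mathcal{A}(x,t)>0$ there, combined with continuity of $\mathcal{A}$ and compactness of $\overline{\Omega}$, gives $0<c_0\leq \mathcal{A}(x,|v|_{L^{r(x)}})\leq C_0$ uniformly. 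The associated (weighted) $p(x)$-Laplacian is therefore strictly monotone and coercive on $W_0^{1,p(x)}(\Omega)$, and the right-hand side lies in $L^\infty(\Omega)$ because $f,g$ are continuous on $\overline{\Omega}\times[0,|\overline{u}|_{L^\infty}]$.

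The crux is showing $T(K)\subset K$. Given $v\in K$, I would compare $u=T(v)$ with $\underline{u}$ by testing the difference of the equation defining $T(v)$ and the subsolution inequality satisfied by $\underline{u}$ against the admissible test function $(\underline{u}-u)^+\in W_0^{1,p(x)}(\Omega)$. Using $f,g\geq 0$, the sign control of the frozen nonlocal multipliers, and strict monotonicity of $\xi\mapsto |\xi|^{p(x)-2}\xi$, the left-hand side is nonnegative while the right-hand side is nonpositive, forcing $(\underline{u}-u)^+\equiv 0$, i.e.\ $\underline{u}\leq u$. The symmetric argument with $(u-\overline{u})^+$ yields $u\leq\overline{u}$; positivity follows from $\underline{u}>0$.

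Compactness and continuity of $T$ then come from the $p(x)$-Laplacian regularity theory. Since the right-hand side is uniformly bounded in $L^\infty(\Omega)$ and the coefficient is uniformly elliptic, the $C^{1,\beta}$ regularity of Fan (reference \cite{fan-regular} in the excerpt) provides a uniform $C^{1,\beta}(\overline{\Omega})$ bound on $T(v)$ for $v\in K$, whence $T(K)$ is precompact in $X$. For continuity, if $v_n\to v$ in $X$ then $\mathcal{A}(x,|v_n|_{L^{r(x)}})$ and the right-hand side converge pointwise, and a Minty-type passage to the limit in the monotone operator together with the uniform $C^{1,\beta}$ bound gives $T(v_n)\to T(v)$ in $X$. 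Schauder's fixed point theorem then produces $u\in K$ with $T(u)=u$, a weak positive solution of $(P)$ lying in $[\underline{u},\overline{u}]$.

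I expect the main obstacle to be the weak comparison in the second paragraph. The $p(x)$-Laplacian is not homogeneous, has no first eigenvalue (as the introduction emphasizes), and the frozen coefficient $\mathcal{A}(x,|v|_{L^{r(x)}})$ generally differs from the coefficients appearing in the sub- and supersolution inequalities, so one cannot simply cite a classical comparison principle. Following the approach the authors attribute to \cite{Liu}, the comparison must be carried out purely by monotone-operator testing, and the sub-supersolution definition must be formulated (using $f,g\geq 0$ and the monotonicity of $|\cdot|_{L^{m(x)}}$ under pointwise ordering) so that the inequalities remain valid after freezing the nonlocal ingredients at an arbitrary $v\in[\underline{u},\overline{u}]$; ensuring that this definition is consistent with the rest of the argument is the delicate technical point.
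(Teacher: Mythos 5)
Your proposal does not address the statement at hand. The statement to be proved is the standard preliminary result on variable-exponent Lebesgue spaces: part (i) is the H\"older-type inequality $\left|\int_{\Omega}uv\,dx\right|\leq \bigl(\frac{1}{p^{-}}+\frac{1}{q^{-}}\bigr)|u|_{L^{p(x)}}|v|_{L^{q(x)}}$ for conjugate variable exponents, and part (ii) is the embedding $L^{p(x)}(\Omega)\hookrightarrow L^{q(x)}(\Omega)$ when $q(x)\leq p(x)$ a.e.\ and $|\Omega|<\infty$. What you have written is instead a proof sketch of the paper's main existence theorem (Theorem \ref{theorem-to-(P)}): you set up a frozen-coefficient operator, invoke Minty--Browder, Schauder's fixed point theorem, comparison via $(\underline{u}-u)^{+}$, and $C^{1,\beta}$ regularity. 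None of that is relevant to the H\"older inequality or the embedding, which are elementary facts about the Luxemburg norm.

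A correct argument for the actual statement would run along entirely different lines. For (i), one applies Young's inequality pointwise in the form $\frac{|u(x)|}{|u|_{L^{p(x)}}}\cdot\frac{|v(x)|}{|v|_{L^{q(x)}}}\leq \frac{1}{p(x)}\Bigl(\frac{|u(x)|}{|u|_{L^{p(x)}}}\Bigr)^{p(x)}+\frac{1}{q(x)}\Bigl(\frac{|v(x)|}{|v|_{L^{q(x)}}}\Bigr)^{q(x)}$, integrates over $\Omega$, and uses the unit-ball property $\rho(u/|u|_{L^{p(x)}})\leq 1$ from Proposition \ref{norm-property} together with $\frac{1}{p(x)}\leq\frac{1}{p^{-}}$ and $\frac{1}{q(x)}\leq\frac{1}{q^{-}}$. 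For (ii), one normalizes $u$ with $|u|_{L^{p(x)}}\leq 1$, splits $\Omega$ into the sets where $|u|\leq 1$ and $|u|>1$, uses $|u|^{q(x)}\leq 1+|u|^{p(x)}$ there, and concludes $\rho_{q(x)}(u)\leq |\Omega|+\rho_{p(x)}(u)$, which via Proposition \ref{norm-property} yields the norm bound $|u|_{L^{q(x)}}\leq C|u|_{L^{p(x)}}$ with a constant depending on $|\Omega|$. The paper itself omits the proof and refers to \cite{fan}, but in any case your submission proves a different theorem and leaves the stated one entirely unproved.
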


We define the generalized Sobolev space as 
$$W^{1,p(x)}(\Omega):=\left\{u\in L^{p(x)}(\Omega):\frac{\partial u}{\partial x_{j}}\in L^{p(x)}(\Omega),j=1,...,N\right\}$$
with the norm $\|u\|_{*}=|u|_{L^{p(x)}}+\sum_{j=1}^{N}\big|\frac{\partial u}{\partial x_{j}}\big|_{L^{p(x)}}, u\in W^{1,p(x)}(\Omega).$ The space $W_{0}^{1,p(x)}(\Omega)$ is defined as the closure of  $C_{0}^{\infty}(\Omega)$ with respect to the norm $\|.\|_{*}.$

\begin{theorem} If $p^{-}>1$ then $W^{1,p(x)}(\Omega)$ is a  Banach, separable and reflexive space.
	
\end{theorem}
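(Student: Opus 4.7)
The plan is to reduce the three properties (Banach, separable, reflexive) to the corresponding properties of $L^{p(x)}(\Omega)$ via an isometric embedding of $W^{1,p(x)}(\Omega)$ into a finite power of $L^{p(x)}(\Omega)$.

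First I would define the linear map $T:W^{1,p(x)}(\Omega)\to [L^{p(x)}(\Omega)]^{N+1}$ by $Tu=\bigl(u,\partial u/\partial x_{1},\ldots,\partial u/\partial x_{N}\bigr)$, and equip the product with the norm $\|(v_{0},\ldots,v_{N})\|=|v_{0}|_{L^{p(x)}}+\sum_{j=1}^{N}|v_{j}|_{L^{p(x)}}$. By the very definition of $\|\cdot\|_{*}$, $T$ is a linear isometry onto its image $X:=T(W^{1,p(x)}(\Omega))$. Once I show $X$ is closed in the product, each of the three properties of $L^{p(x)}(\Omega)$ transfers to $W^{1,p(x)}(\Omega)$ through $T$, because finite products and closed subspaces preserve all three.

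The key technical step is therefore closedness of $X$. Given a sequence $\{u_{n}\}\subset W^{1,p(x)}(\Omega)$ with $Tu_{n}\to (u,v_{1},\ldots,v_{N})$ in $[L^{p(x)}(\Omega)]^{N+1}$, I would fix $\varphi\in C_{0}^{\infty}(\Omega)$ and observe that since $p^{-}>1$ the conjugate exponent $q(x)=p(x)/(p(x)-1)$ lies in $L_{+}^{\infty}(\Omega)$, and $\varphi,\partial\varphi/\partial x_{j}\in L^{\infty}(\Omega)\subset L^{q(x)}(\Omega)$. The H\"older inequality from the previous theorem gives
$$\left|\int_{\Omega}(u_{n}-u)\frac{\partial\varphi}{\partial x_{j}}dx\right|\leq C\,|u_{n}-u|_{L^{p(x)}}\left|\frac{\partial\varphi}{\partial x_{j}}\right|_{L^{q(x)}}\to 0,$$
and similarly $\int_{\Omega}(\partial u_{n}/\partial x_{j})\varphi\,dx\to\int_{\Omega}v_{j}\varphi\,dx$. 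Passing to the limit in $\int_{\Omega}u_{n}(\partial\varphi/\partial x_{j})\,dx=-\int_{\Omega}(\partial u_{n}/\partial x_{j})\varphi\,dx$ identifies $v_{j}$ with the weak derivative $\partial u/\partial x_{j}$, so $(u,v_{1},\ldots,v_{N})=Tu\in X$.

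With closedness in hand I would conclude: (a) $L^{p(x)}(\Omega)$ is Banach (previous section), a finite product of Banach spaces is Banach, a closed subspace of a Banach space is Banach, hence $W^{1,p(x)}(\Omega)$ is Banach; (b) by Fan-Zhang \cite{fan}, $L^{p(x)}(\Omega)$ is separable, finite products preserve separability, and subspaces of separable metric spaces are separable; (c) by Fan-Zhang \cite{fan}, the hypothesis $p^{-}>1$ yields reflexivity of $L^{p(x)}(\Omega)$, finite products of reflexive spaces are reflexive, and closed subspaces of reflexive spaces are reflexive. The main obstacle is the closedness of $X$: it is exactly at this stage that the hypothesis $p^{-}>1$ is used, since it guarantees that $q(x)$ is finite a.e.\ and bounded, allowing the H\"older duality pairing with test functions that is needed to pass derivatives to the limit.
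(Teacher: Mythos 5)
Your proof is correct, and it is essentially the standard argument found in the reference the paper cites for this fact (Fan--Zhang and the classical Kov\'a\v{c}ik--R\'akosn\'{\i}k treatment): the paper itself states the theorem without proof, and your isometric embedding of $W^{1,p(x)}(\Omega)$ into $[L^{p(x)}(\Omega)]^{N+1}$ followed by the closedness argument via the weak-derivative identity is exactly how the cited sources proceed. One small inaccuracy in your commentary, not in the proof: the hypothesis $p^{-}>1$ is not really needed for closedness (completeness and separability already hold when $p^{-}\geq 1$, and the duality pairing with $C_{0}^{\infty}$ test functions can be run through $L^{p(x)}(\Omega)\hookrightarrow L^{1}(\Omega)$ on the bounded domain); its essential role is in guaranteeing the reflexivity of $L^{p(x)}(\Omega)$, which you do invoke correctly in part (c).
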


\begin{proposition}\label{embeddings} Let $\Omega\subset\mathbb{R}^{N}$ be a bounded domain and consider $p,q\in C(\overline{\Omega}).$ Define the function $p^{*}(x)=\frac{Np(x)}{N-p(x)}$ if $p(x)<N$ and $p^{*}(x)=\infty$ if $N\geq p(x).$ The following statements hold
	
	\noindent {\bf (i)} (Poincar\'{e} inequality) If $p^{-}>1$ then there is a constant $C > 0$ such that $|u|_{L^{p(x)}}\leq C|\nabla u|_{L^{p(x)}}$ for all $u\in W_{0}^{1,p(x)}(\Omega).$
\\
	
	\noindent {\bf (ii)} If $p^{-},q^{-} > 1$ and $q(x)<p^{*}(x)$ for all $x\in\overline{\Omega},$ the embedding
	$W^{1,p(x)}(\Omega) \hookrightarrow L^{q(x)}(\Omega)$ is continuous and compact.
\end{proposition}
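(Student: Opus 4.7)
The plan for both parts is to reduce the variable-exponent statements to their classical constant-exponent counterparts, exploiting the continuity of $p$ and $q$ on the compact set $\overline{\Omega}$ together with the modular-versus-norm machinery of Proposition \ref{norm-property}.

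For part (i), I would first establish a modular Poincaré bound
\[
\int_{\Omega}|u(x)|^{p(x)}\,dx \leq C\int_{\Omega}|\nabla u(x)|^{p(x)}\,dx
\]
for $u\in C_{0}^{\infty}(\Omega)$. Enclosing $\Omega$ in a slab $[a,b]\times\mathbb{R}^{N-1}$, extending $u$ by zero, and writing $u(x_{1},x')=\int_{a}^{x_{1}}\partial_{1}u(t,x')\,dt$ yields the pointwise inequality
\[
|u(x)| \leq \int_{a}^{b}|\nabla u(t,x')|\chi_{\Omega}(t,x')\,dt.
\]
Applying the variable-exponent Hölder inequality in the inner variable (with conjugate exponent $p'(x)$) and integrating over $\Omega$ produces the modular bound with constant depending only on $\mathrm{diam}(\Omega)$ and $p^{\pm}$. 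Proposition \ref{norm-property} then converts this into the norm inequality $|u|_{L^{p(x)}}\leq C|\nabla u|_{L^{p(x)}}$, and density of $C_{0}^{\infty}(\Omega)$ in $W_{0}^{1,p(x)}(\Omega)$ extends it to the whole space.

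For part (ii), I would proceed by localization. Since $p,q$ are continuous on $\overline{\Omega}$ and $q(x)<p^{*}(x)$ pointwise, compactness of $\overline{\Omega}$ yields a uniform $\delta>0$ with $q(x)+\delta\leq p^{*}(x)-\delta$ for all $x$. Cover $\overline{\Omega}$ by finitely many balls $B_{i}$ on which both $p$ and $q$ oscillate by less than $\delta/4$, and pick constant exponents $\tilde{p}_{i}\leq p(x)$ and $\tilde{q}_{i}\geq q(x)$ on $B_{i}$ satisfying $\tilde{q}_{i}<\tilde{p}_{i}^{*}$. Fix a subordinate partition of unity $\{\eta_{i}\}$. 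For a bounded sequence $(u_{n})$ in $W^{1,p(x)}(\Omega)$, each $\eta_{i}u_{n}$ is bounded in $W^{1,\tilde{p}_{i}}(B_{i})$ thanks to the continuous embedding $L^{p(x)}(B_{i})\hookrightarrow L^{\tilde{p}_{i}}(B_{i})$ on bounded sets (item (ii) of the Hölder/embedding theorem stated above). The classical Rellich--Kondrachov theorem furnishes a subsequence converging in $L^{\tilde{q}_{i}}(B_{i})$, and hence in $L^{q(x)}(B_{i})$ because $q(x)\leq \tilde{q}_{i}$. A finite diagonalization across $i$ yields a subsequence Cauchy in $L^{q(x)}(\Omega)$, establishing compactness; the continuity of the embedding follows from the same localized estimates.

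The main obstacle lies in part (ii): the strict inequality $q(x)<p^{*}(x)$ must be leveraged to extract the uniform gap $\delta$ and, on each local patch, to choose constant exponents $\tilde{p}_{i},\tilde{q}_{i}$ that \emph{simultaneously} satisfy $\tilde{p}_{i}\leq p$, $\tilde{q}_{i}\geq q$, and $\tilde{q}_{i}<\tilde{p}_{i}^{*}$. This requires careful calibration of the cover's diameter against the moduli of continuity of $p$ and $q$, since $p\mapsto p^{*}$ is continuous but becomes sensitive near $p=N$ (ruled out here by $p^{+}<N$). Part (i) is essentially standard once the variable-exponent Hölder bookkeeping is routed through the modular via Proposition \ref{norm-property}.
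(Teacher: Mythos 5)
The paper does not prove Proposition \ref{embeddings} at all: it is stated in the preliminaries as a known result quoted from the literature (Fan--Zhang \cite{fan} and the references therein), so there is no internal proof to compare against. Judged on its own terms, your proposal for part (ii) is essentially the standard localization proof and is sound in outline, but your proposal for part (i) contains a fatal gap.

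The gap in (i) is that the modular Poincar\'{e} inequality $\int_{\Omega}|u|^{p(x)}dx\leq C\int_{\Omega}|\nabla u|^{p(x)}dx$, which you take as the first step, is \emph{false} for general non-constant continuous (even smooth) exponents. This is precisely the phenomenon the paper itself invokes when it recalls, citing \cite{fan-eigen}, that the infimum of the Rayleigh quotient $\rho(\nabla u)/\rho(u)$ over $W_{0}^{1,p(x)}(\Omega)$ can equal $0$; a positive constant in the modular inequality would force that infimum to be positive. The structural reason your slab argument cannot rescue this is homogeneity: the norm inequality $|u|_{L^{p(x)}}\leq C|\nabla u|_{L^{p(x)}}$ is invariant under $u\mapsto tu$, while the two modulars scale like $\int t^{p(x)}|u|^{p(x)}$ and $\int t^{p(x)}|\nabla u|^{p(x)}$, whose ratio is not controlled uniformly in $t$ when $p$ is non-constant (translating and rescaling a bump near an extremum of $p$ produces competitors violating any fixed modular constant). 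For the same reason the final conversion step fails: Proposition \ref{norm-property} turns a modular bound into inequalities of the form $|u|_{L^{p(x)}}^{p^{+}}\leq C|\nabla u|_{L^{p(x)}}^{p^{-}}$ (or with the roles of $p^{\pm}$ exchanged, depending on whether the norms exceed $1$), and the mismatched powers do not yield a linear bound uniformly over all $u$. The standard correct routes are either to deduce (i) from the compact embedding of part (ii) by the usual contradiction argument (a sequence with $|u_{n}|_{L^{p(x)}}=1$ and $|\nabla u_{n}|_{L^{p(x)}}\to0$ has a subsequence converging in $L^{p(x)}$ to a constant with zero trace), or to run a pointwise estimate $|u|\leq C I_{1}(|\nabla u|)$ through the boundedness of the Riesz potential/averaging operator on $L^{p(x)}$.

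Part (ii) as you present it is the classical argument of Kov\'{a}\v{c}ik--R\'{a}kosn\'{i}k and Fan--Zhao: a finite cover on which the oscillations of $p$ and $q$ are small, frozen exponents $\tilde{p}_{i}\leq p$, $\tilde{q}_{i}\geq q$ with $\tilde{q}_{i}<\tilde{p}_{i}^{*}$, classical Rellich--Kondrachov on each patch, and a finite diagonalization. Two details deserve care: the uniform gap $\delta$ must be extracted with attention to the points where $p(x)\geq N$ (there $p^{*}(x)=\infty$, and the proposition does not assume $p^{+}<N$, so you should argue separately on the compact set where $p\leq N-\epsilon$ and use the boundedness of $q$ elsewhere); and since the embedding is claimed for $W^{1,p(x)}(\Omega)$ rather than the zero-trace space, the boundary patches $B_{i}\cap\Omega$ must be extension domains for Rellich--Kondrachov to apply, which the paper's $C^{2}$ boundary guarantees.
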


From $(i)$ of Proposition \ref{embeddings} we have that  $|\nabla u|_{L^{p(x)}}$ defines a norm in  $W_{0}^{1,p(x)}(\Omega)$ which is equivalent to the norm $\|.\|_{*}.$

\begin{definition}Consider $u,v\in W^{1,p(x)}(\Omega)$. We say that $-\Delta_{p(x)}u\leq -\Delta_{p(x)}v,$ if
$$\int_{\Omega}|\nabla u|^{p(x)-2}\nabla u \nabla\varphi\leq\int_{\Omega}|\nabla v|^{p(x)-2}\nabla v \nabla\varphi,$$	
 for all $\varphi\in W_{0}^{1,p(x)}(\Omega)$ with $\varphi\geq0.$
\end{definition}
The following result is contained in \cite[Lemma 2.2]{fan-zang} and \cite[Proposition 2.3]{Fan}.
\begin{proposition}\label{PC} Consider $u,v\in W^{1,p(x)}(\Omega).$ If $-\Delta_{p(x)}u\leq-\Delta_{p(x)}v$ and $u\leq v$ on $\partial\Omega,$ (i.e., $(u-v)^{+}\in W_{0}^{1,p(x)}(\Omega)$) then $u\leq v$ in $\Omega.$ If $u,v\in C(\overline{\Omega})$ and the set $S=\big\{x\in\Omega: u(x)=v(x)\big\}$ is a compact set of $\Omega$ then $S=\emptyset.$ 
\end{proposition}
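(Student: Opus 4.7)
I would treat the two assertions separately.

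For the weak comparison, the plan is to test the distributional inequality $-\Delta_{p(x)}u\le-\Delta_{p(x)}v$ against the nonnegative function $\varphi:=(u-v)^{+}$, which by hypothesis lies in $W_0^{1,p(x)}(\Omega)$ and is therefore admissible. After subtraction one obtains
\[\int_{\{u>v\}}\bigl(|\nabla u|^{p(x)-2}\nabla u-|\nabla v|^{p(x)-2}\nabla v\bigr)\cdot\nabla(u-v)\,dx\le 0.\]
For each fixed $x$ with $p(x)>1$ the vector field $\xi\mapsto|\xi|^{p(x)-2}\xi$ is strictly monotone on $\mathbb{R}^N$, so the integrand is pointwise nonnegative and vanishes only when $\nabla u=\nabla v$. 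Hence $\nabla(u-v)^{+}=0$ a.e.\ in $\Omega$; combined with $(u-v)^{+}\in W_0^{1,p(x)}(\Omega)$ and the Poincar\'e inequality of Proposition \ref{embeddings}(i), this forces $(u-v)^{+}\equiv 0$, i.e.\ $u\le v$ in $\Omega$.

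For the second assertion, assume $u,v\in C(\overline{\Omega})$ and suppose for contradiction that $S:=\{x\in\Omega:u(x)=v(x)\}$ is nonempty and compact in $\Omega$. From the first part $w:=v-u\ge 0$ in $\Omega$. Compactness of $S$ inside the open connected domain $\Omega$ yields $S\subsetneq\Omega$ with $\Omega\setminus S$ nonempty, and by connectedness $S$ has nonempty topological boundary relative to $\Omega$. Pick $x_0$ in this boundary: then $w(x_0)=0$ while $w(x_n)>0$ along some sequence $x_n\to x_0$ in $\Omega\setminus S$. Choose a ball $B_r(x_0)\Subset\Omega$ small enough that the oscillation of $p$ on it is controlled, and invoke the strong comparison / strong maximum principle for the $p(x)$-Laplacian given in \cite[Lemma 2.2]{fan-zang} and \cite[Proposition 2.3]{Fan}: applied to the pair $(u,v)$ on $B_r(x_0)$, it forces $u\equiv v$ there, contradicting the existence of the sequence $(x_n)$.

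The step I expect to be the main obstacle is the second part. The strong comparison principle for the $p(x)$-Laplacian is substantially more delicate than its constant-exponent analogue, because the operator is not homogeneous and the standard V\'azquez barrier construction must be localized to balls on which the oscillation of $p$ is small. The cleanest route is to freeze the exponent at $p(x_0)$, apply a V\'azquez-type barrier for the resulting constant-exponent $p(x_0)$-Laplacian on a sufficiently small ball, and absorb the perturbative error produced by the spatial variation of $p$ using continuity of $p$. As this machinery is fully developed in the cited works, I would invoke it as a black box rather than reprove it here.
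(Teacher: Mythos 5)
The paper does not actually prove Proposition \ref{PC}: it is quoted verbatim from \cite[Lemma 2.2]{fan-zang} and \cite[Proposition 2.3]{Fan}, so there is no internal proof to compare against. Your argument for the first assertion --- testing the weak inequality with $\varphi=(u-v)^{+}\in W_0^{1,p(x)}(\Omega)$, using strict monotonicity of $\xi\mapsto|\xi|^{p(x)-2}\xi$ for $p(x)>1$, and concluding via the Poincar\'e inequality of Proposition \ref{embeddings}(i) --- is the standard one and is correct.

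The second assertion is where you have a genuine gap. The ``strong comparison principle'' you invoke --- that $u\le v$ together with $u(x_0)=v(x_0)$ at an interior point forces $u\equiv v$ on a small ball --- is not contained in the references you cite. What \cite{fan-zang} establishes is a V\'azquez-type strong maximum principle for a \emph{single} nonnegative supersolution (comparison with the zero function, plus a Hopf boundary lemma), together with precisely the compactness statement you are trying to prove. A strong comparison principle between two arbitrary ordered sub/supersolutions of the $p$-Laplacian is a notoriously delicate (in general open) matter even for constant $p$, because the operator degenerates on the set where $\nabla u=\nabla v=0$ and barrier or linearization arguments break down there; your ``freeze the exponent and build a V\'azquez barrier'' sketch would only reprove the single-function maximum principle, not the two-function statement you need. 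Moreover, no such machinery is required: the intended argument is elementary. If $S\neq\emptyset$ is compact, choose an open set $D$ with $S\subset D\Subset\Omega$; by the first part and continuity, $v-u\ge\varepsilon>0$ on the compact set $\partial D$ for some $\varepsilon>0$. Since adding a constant does not change the operator, $-\Delta_{p(x)}(u+\varepsilon/2)=-\Delta_{p(x)}u\le-\Delta_{p(x)}v$ in $D$, and $(u+\varepsilon/2-v)^{+}$ vanishes in a neighbourhood of $\partial D$, hence belongs to $W_0^{1,p(x)}(D)$. The first assertion applied on $D$ then gives $u+\varepsilon/2\le v$ in $D$, contradicting $u=v$ on $S\subset D$.
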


\begin{lemma}\cite[Lemma 2.1]{Fan}\label{Fan}
	Let $\lambda>0$ be the unique solution of the problem
	\begin{equation}\label{probl-linear-lambda}
	\begin{aligned}
	\left\{\begin{array}{rcl}
	-\Delta_{p(x)}z_{\lambda} &=&\lambda\;\;\mbox{in} \;\;\Omega,\\
	\vspace{.2cm}
	u&=&0\;\;\mbox{on}\;\;\partial\Omega.
	\end{array}
	\right.
	\end{aligned}
	\end{equation}
Define $\rho_{0}=\frac{p^{-}}{2|\Omega|^{\frac{1}{N}}C_{0}}.$  If $\lambda\geq \rho_{0}$ then $|z_{\lambda}|_{L^{\infty}}\leq C^{*}M^{\frac{1}{p^{-}-1}}$ and $|z_{\lambda}|_{L^{\infty}}\leq C_{*}M^{\frac{1}{p^{+}-1}}$ if  $\lambda<\rho_{0}.$
Here  $C^{*}$ and $C_{*}$ are positive constants dependending only on $p^{+},p^{-},N,|\Omega|$ and $C_{0}$ where  $C_{0}$ is the best constant of the embedding  $W_{0}^{1,1}(\Omega) \hookrightarrow L^{\frac{N}{N-1}}(\Omega)$.
\end{lemma}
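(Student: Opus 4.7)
The plan is to derive both $L^\infty$-bounds for $z_\lambda$ by a De Giorgi / Stampacchia truncation iteration, with the two regimes $\lambda \geq \rho_0$ and $\lambda < \rho_0$ distinguished by which of $p^-$, $p^+$ governs the iteration constants.

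First, I would test the weak form of $-\Delta_{p(x)} z_\lambda = \lambda$ against the admissible function $\varphi = (z_\lambda - k)^+ \in W_0^{1,p(x)}(\Omega)$ for $k \geq 0$. Writing $A_k := \{z_\lambda > k\}$, this produces the fundamental identity
$$\int_{A_k} |\nabla z_\lambda|^{p(x)} \, dx = \lambda \int_{A_k} (z_\lambda - k) \, dx.$$
Applying Young's inequality in the pointwise form $|\nabla z_\lambda| \leq \frac{1}{p(x)} |\nabla z_\lambda|^{p(x)} + \frac{p(x)-1}{p(x)}$ shows that the $L^1$-norm of $\nabla z_\lambda$ on $A_k$ is controlled by $\frac{\lambda}{p^-} \int_{A_k}(z_\lambda - k)\,dx + |A_k|$.

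Next I would invoke the Sobolev embedding $W_0^{1,1}(\Omega) \hookrightarrow L^{N/(N-1)}(\Omega)$ (best constant $C_0$) applied to $(z_\lambda - k)^+$, together with the Hölder inequality $\int_{A_k}(z_\lambda-k)\,dx \leq |A_k|^{1/N} \phi(k)$, where $\phi(k) := \|(z_\lambda - k)^+\|_{L^{N/(N-1)}}$, to reach the closed iterative estimate
$$\phi(k) \;\leq\; \frac{C_0 \lambda}{p^-}\,|A_k|^{1/N}\,\phi(k) \;+\; C_0\,|A_k|.$$
In the low-amplitude regime $\lambda < \rho_0$, the definition of $\rho_0$ forces $\tfrac{C_0 \lambda}{p^-}|\Omega|^{1/N} < \tfrac{1}{2}$, so absorption of the first term works already from $k = 0$; a bootstrap based on this initial bound plus the comparison principle of Proposition \ref{PC} then shows $|\nabla z_\lambda| \leq 1$ a.e., which unlocks the pointwise inequality $|\nabla z_\lambda|^{p(x)} \geq |\nabla z_\lambda|^{p^+}$. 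Redoing the Young step with $p^+$ in place of $p^-$ delivers the sharper dependence $|z_\lambda|_{L^\infty} \leq C_* \lambda^{1/(p^+ - 1)}$. In the complementary regime $\lambda \geq \rho_0$, I would instead choose a starting level $k_0$ large enough (using an a priori energy estimate obtained by testing with $z_\lambda$ itself) so that $\tfrac{C_0 \lambda}{p^-}|A_{k_0}|^{1/N} \leq \tfrac{1}{2}$; absorption then yields $\phi(k) \leq 2 C_0 |A_k|$ for all $k \geq k_0$, and the classical Stampacchia lemma applied to the super-linear recursion $|A_h| \leq \bigl(\tfrac{2C_0}{h-k}\bigr)^{N/(N-1)} |A_k|^{N/(N-1)}$ ($h > k \geq k_0$) gives $|z_\lambda|_{L^\infty} \leq C^* \lambda^{1/(p^- - 1)}$.

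The main obstacle is the non-homogeneity of the modular $\int |\nabla u|^{p(x)}$: unlike the constant-exponent $p$-Laplacian, one cannot rescale $z_\lambda$ to normalize $\lambda$, which is precisely why the two regimes genuinely require separate bookkeeping and why the threshold $\rho_0 = p^-/(2|\Omega|^{1/N}C_0)$ appears as the break-even point between the two exponents. The most delicate step is the justification, in the small-$\lambda$ case, of the pointwise inequality $|\nabla z_\lambda| \leq 1$ that allows $p^+$ to take over the role of $p^-$; this needs a careful preliminary energy estimate in $W_0^{1,p(x)}$ together with the quantitative tracking of constants so that the final exponent $C_*$ depends only on $p^\pm, N, |\Omega|, C_0$ as stated.
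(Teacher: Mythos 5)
First, a point of order: the paper does not prove this lemma at all --- it is quoted verbatim from \cite[Lemma 2.1]{Fan}, so there is no in-house argument to compare yours against. (Incidentally, the $M$ in the statement should be read as $\lambda$; it is inherited from Fan's more general formulation, as the way the lemma is used in Section 4 confirms.) Your overall skeleton --- testing with $(z_\lambda-k)^{+}$, the embedding $W_0^{1,1}(\Omega)\hookrightarrow L^{N/(N-1)}(\Omega)$ with constant $C_0$, and a Stampacchia iteration on the level sets $A_k$ --- is the right one and is essentially the standard route to this estimate.

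However, both regimes as you set them up fail to produce the stated powers of $\lambda$, and for the same reason: you apply Young's inequality with a fixed (unit) parameter. From $|\nabla z_\lambda|\le \frac{1}{p^-}|\nabla z_\lambda|^{p(x)}+1$ the absorbed inequality $\phi(k)\le 2C_0|A_k|$ (valid when $\lambda<\rho_0$) yields via Stampacchia a $\lambda$-\emph{independent} bound on $|z_\lambda|_{L^\infty}$, not the decaying bound $C_*\lambda^{1/(p^+-1)}$. Your proposed repair --- that Proposition \ref{PC} gives $|\nabla z_\lambda|\le 1$ a.e. --- does not work: that proposition compares the \emph{values} of sub- and supersolutions and carries no pointwise information on gradients, and a quantitative a.e.\ gradient bound at this stage would itself require the kind of estimate being proved. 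In the regime $\lambda\ge\rho_0$ the bookkeeping also breaks: Chebyshev plus the energy estimate give $|A_{k_0}|\le C\lambda^{1/(p^--1)}/k_0$, so forcing $\frac{C_0\lambda}{p^-}|A_{k_0}|^{1/N}\le\frac12$ requires $k_0\gtrsim\lambda^{N+\frac{1}{p^--1}}$, which ruins the claimed exponent. The missing idea is to insert a free parameter into Young's inequality, $|\nabla z_\lambda|\le\frac{\varepsilon^{p(x)}}{p^-}|\nabla z_\lambda|^{p(x)}+\varepsilon^{-p'(x)}$, and to choose $\varepsilon^{p^-}=\rho_0/\lambda$ when $\lambda\ge\rho_0$ (so $\varepsilon\le1$, whence $\varepsilon^{p(x)}\le\varepsilon^{p^-}$ and $\varepsilon^{-p'(x)}\le\varepsilon^{-(p^-)'}$), respectively $\varepsilon^{p^+}=\rho_0/\lambda$ when $\lambda<\rho_0$ (so $\varepsilon\ge1$ and the roles of $p^-$ and $p^+$ switch). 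In either case the absorption constant becomes exactly $\frac12$ from $k=0$ on, one gets $\phi(k)\le 2C_0\,\varepsilon^{-(p^{\mp})'}|A_k|$, and Stampacchia yields $|z_\lambda|_{L^\infty}\le C\,(\lambda/\rho_0)^{1/(p^{\mp}-1)}$ with $C=C(N,|\Omega|,C_0,p^{\pm})$ --- precisely the dichotomy at $\rho_0$ in the statement, with no need for the spurious gradient bound or the problematic choice of $k_0$.
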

Regarding to the function  $z_{\lambda}$ of the previous result, it follows from  \cite[Theorem 1.2]{fan-regular} and  \cite[Theorem 1]{fan-zang} that   $z_{\lambda} \in C^{1}(\overline{\Omega}) $ with $z_\lambda >0$ in $\Omega.$

\section{Proof of Theorem \ref{theorem-to-(P)}} 

The goal of this section is the proof of Theorem \ref{theorem-to-(P)}. First we need some definitions.

We say that  $u\in W_{0}^{1,p(x)}(\Omega)\bigcap L^{\infty}(\Omega)$ is a  (weak) solution of $(P)$ if
$$\int_{\Omega}|\nabla u|^{p(x)-2}\nabla u\nabla\varphi=\int_{\Omega}\left(\frac{f(x,u)|u|_{L^{q(x)}}^{\alpha(x)}}{\mathcal{A}(x,|u|_{L^{r(x)}})}+\frac{g(x,u)|u|_{L^{s(x)}}^{\gamma(x)}}{\mathcal{A}(x,|u|_{L^{r(x)}})}\right)\varphi$$
for all $\varphi\in W_{0}^{1,p(x)}(\Omega).$

Given $u,v\in \mathcal{S}(\Omega)$ we write $u\leq v$ if $u(x)\leq v(x)$ a.e in $\Omega$ and  $[u,v]:=\bigl\{w\in \mathcal{S}(\Omega): u(x)\leq w(x)\leq v(x)\;\text{a.e in}\;\Omega\bigl\}.$

We say that  $(\underline{u},\overline{u})$ is a sub-super solution pair for $(P)$  if $\underline{u}\in$ $W_{0}^{1,p(x)}(\Omega)\bigcap L ^{\infty}(\Omega),$ $\overline{u}\in W^{1,p(x)}(\Omega)\bigcap L ^{\infty}(\Omega)$ and satisfy the inequalities  $\underline{u}\leq\overline{u}$, $\underline{u}=0\leq\overline{u}\;\;\text{ on}\;\;\partial\Omega$ and if for all $\varphi\in W_{0}^{1,p(x)}(\Omega)$ with $\varphi\geq0$ the following inequalities hold

\begin{equation}\label{eq1.1}
\int_{\Omega}|\nabla \underline{u}|^{p(x)-2}\nabla\underline{u}\nabla\varphi\leq \int_{\Omega}\left(\frac{f(x,\underline{u})|\underline{u}|_{L^{q(x)}}^{\alpha(x)}}{\mathcal{A}(x,|w|_{L^{r(x)}})}+\frac{g(x,\underline{u})|\underline{u}|_{L^{s(x)}}^{\gamma(x)}}{\mathcal{A}(x,|w|_{L^{r(x)}})}\right)\varphi
\end{equation}
and
\begin{equation}\label{eq1.2}
\int_{\Omega}|\nabla \overline{u}|^{p(x)-2}\nabla\overline{u}\nabla\varphi\geq \int_{\Omega}\left(\frac{f(x,\overline{u})|\overline{u}|_{L^{q(x)}}^{\alpha(x)}}{\mathcal{A}(x,|w|_{L^{r(x)}})}+\frac{g(x,\overline{u})|\overline{u}|_{L^{s(x)}}^{\gamma(x)}}{\mathcal{A}(x,|w|_{L^{r(x)}})}\right)\varphi
\end{equation}
for all $w\in [\underline{u},\overline{u}].$

\begin{proof}[Proof of Theorem \ref{theorem-to-(P)}] Consider the operator $T:L^{p(x)}(\Omega)\rightarrow L^{\infty}(\Omega)$ defined by
	$$(Tu)(x)=\left\{\begin{array}{rcl}
	\underline{u}(x)\;\;\;\;\;\;\mbox{if}\;\;\;\;\;\;\;\;\;\;\;\;\;u(x)\leq\underline{u}(x),\\
	\vspace{.2cm}
  	 u(x) \;\;\;\;\; \mbox{if}\;\;\underline{u}(x)\leq u(x)\leq\overline{u}(x),\\
	\vspace{.2cm}
	\overline{u}(x)\;\;\;\;\;\;\mbox{if}\;\;\;\;\;\;\;\;\;\;\;\;\;u(x)\geq\overline{u}(x).
	\end{array}
	\right.$$
The operator $T$ is well defined because $\underline{u},\overline{u}\in L^{\infty}(\Omega)$ and $Tu\in[\underline{u},\overline{u}]$  . Let $p'(x)=\frac{p(x)}{p(x)-1}$ and consider the operator
$$H:[\underline{u},\overline{u}]\rightarrow L^{p'(x)}(\Omega)$$
$$H(v)(x)=\frac{f(x,v(x))|v|_{L^{q(x)}}^{\alpha(x)}}{\mathcal{A}(x,|v|_{L^{r(x)}})}+\frac{g(x,v(x))|v|_{L^{s(x)}}^{\gamma(x)}}{\mathcal{A}(x,|v|_{L^{r(x)}})},$$
where $|.|_{L^{m(x)}}$ denotes the norm of  $L^{m(x)}(\Omega).$

Note that the operators  $H$ and $u\mapsto HoT(u)$ are well defined. In fact, since $f,g$ and $\mathcal{A}$ are continuous functions with $\mathcal{A}(x,t)>0$ in the compact set $\overline{\Omega}\times\bigl[|\underline{u}|_{L^{r(x)}},| \overline{u}|_{L^{r(x)}}\bigl]$ and $|w|_{L^{m(x)}}^{\theta(x)}\leq |w|_{L^{m(x)}}^{\theta^{-}}+|w|_{L^{m(x)}}^{\theta^{+}}$ for all $ w\in L^{m(x)}(\Omega),\theta\in L^{\infty}(\Omega)$ then there is a constant $K_{0}>0$ such that
\begin{equation*}
|H(v)|\leq K_{0},\;\forall\;v\in[\underline{u},\overline{u}].
\end{equation*}
Since  $\Omega$ is a bounded domain it follows that $H$ is well defined. The operator $u\mapsto HoT(u)$ is well defined because the inclusion $Tu \in [\underline{u}, \overline{u}]$ implies that 
\begin{equation}\label{eq1.3}
 |H(Tu)  | \leq K_0 
\end{equation}
for all $u \in L^{p(x)}(\Omega).$

We claim that the operator $u\mapsto HoT(u)$ is continuous. In order to show such affirmation let $(u_n)$ be a sequence in $L^{p(x)}(\Omega)$ that converges to $u$ in $L^{p(x)}(\Omega).$ Since $T u_n, Tu \in [\underline{u},\overline{u}]$ the Lebesgue Dominated Convergence Theorem combined with Proposition \ref{norm-property} implies that $T u_n \rightarrow T u$ in $L^{m(x)}(\Omega)$ for all $ m \in L^{\infty}_{+}(\Omega).$ The continuity of $f,g$ and $\mathcal{A}$ combined with the Lebesgue Dominated Convergence Theorem implies that $H(Tu_n) \rightarrow H(Tu)$ in $L^{p^{\prime}(x)}(\Omega)$ and then we have  the desired continuity.

Fix $v \in L^{p(x)}(\Omega)$. The inequality \eqref{eq1.3} implies that $(H o T)(v) \in L^{\infty}(\Omega)$, thus by \cite[Theorem 4.2]{fan-zhang} the problem
$$\left\{\begin{array}{rcl}
-\Delta_{p(x)} u&=&H(Tv) \;\;\text{in}\;\;\Omega,\\
\vspace{.2cm}
u&=&0\;\;\text{on}\;\;\partial\Omega.
\end{array}
\right.\leqno{(P_{L})}$$
has a unique solution. Therefore we can define an operator $S:L^{p(x)}(\Omega)\rightarrow L^{p(x)}(\Omega),$ given by $S(v)=u$ where $u\in W_{0}^{1,p(x)}(\Omega)$ is the unique solution of $(P_{L})$.

We affirm that $S$ is compact. In fact let $(v_n) $ be a bounded sequence in $L^{p(x)}(\Omega)$ and define $u_n:= S(v_n), n \in \mathbb{N}.$ The definition of $S$ implies that 

$$\int_{\Omega}|\nabla u_{n}|^{p(x)-2}\nabla u_{n}\nabla\varphi=\int_{\Omega}H(Tv_{n})\varphi$$
for all $n \in \mathbb{N}$ and $\varphi\in W_{0}^{1,p(x)}(\Omega).$ Using the inclusion $Tv_{n}\in[\underline{u},\overline{u}],$ the inequality   $(\ref{eq1.3})$ and  considering  the test function $\varphi=u_{n}$  we have
$$\int_{\Omega}|\nabla u_{n}|^{p(x)}\leq K_{0}\int_{\Omega}|u_{n}|$$
for all $n \in \mathbb{N}.$

The Poincar\'{e} inequality combined with the embedding $L^{p(x)}(\Omega) \hookrightarrow L^{1}(\Omega)$ implies that
$$\int_{\Omega}|\nabla u_{n}|^{p(x)}\leq C\|u_{n}\|$$
for all $n \in \mathbb{N}$ where $C$ is a constant that does not depend on $n \in \mathbb{N}.$
	
If  $\|u_{n}\|>1 $ then by Proposition \ref{norm-property} we have
$$\|u_{n}\|^{p^{-}}\leq C\|u_{n}\|$$	
for all $n \in \mathbb{N}$ where the constant $C$ does not depend on $n \in \mathbb{N}.$ Therefore the sequence $(u_n)$ is bounded in $W^{1,p(x)}_{0}(\Omega).$ Thus, up to a subsequence , we have $u_n \rightharpoonup u$ in $W^{1,p(x)}_{0}(\Omega)$ for some $u \in W^{1,p(x)}_{0}(\Omega)$. Since the embedding $ W^{1,p(x)}_{0}(\Omega) \hookrightarrow L^{p(x)}(\Omega)$ is compact we have $u_n \rightarrow u$ in $L^{p(x)}(\Omega)$. Therefore $S$  is a compact operator.

With respect to  continuity, let $(v_n)$ a sequence in $L^{p(x)}(\Omega)$ with $v_n \rightarrow v$ in $L^{p(x)}(\Omega)$ for $v \in L^{p(x)}(\Omega).$ Define $u_n:= S(v_n)$ and $u: = S(v).$ Note that

$$\int_{\Omega}|\nabla u_{n}|^{p(x)-2}\nabla u_{n}\nabla\varphi=\int_{\Omega}H(Tv_{n})\varphi$$
and
$$\int_{\Omega}|\nabla u|^{p(x)-2}\nabla u\nabla\varphi=\int_{\Omega}H(Tv)\varphi$$
for all $\varphi \in W^{1,p(x)}_{0}(\Omega)$.
Such equations with $\varphi=u_{n}-u$ provide
$$
	\int_{\Omega}\bigl<|\nabla u_{n}|^{p(x)-2}\nabla u_{n}-|\nabla u|^{p(x)-2}\nabla u,\nabla (u_{n}-u)\bigl>\displaystyle=\int_{\Omega}\bigl[H(Tv_{n})-H(Tv)\bigl](u_{n}-u).
$$

The previous arguments implies that the sequence $(u_n)$ is bounded in $L^{p(x)}(\Omega)$. Thus by H\"{o}lder inequality we have
$$ \left| \int_{\Omega}\bigl[H(Tv_{n})-H(Tv)\bigl](u_{n}-u)  \right| \leq C|(H o T)(v_n) - (H o T)(v)|_{L^{p^{'}(x)}} $$
where the constant $C$ does not depend on $n \in \mathbb{N}.$ Since $H o T$ is continuous we have
$$\int_{\Omega}\bigl<|\nabla u_{n}|^{p(x)-2}\nabla u_{n}-|\nabla u|^{p(x)-2}\nabla u,\nabla (u_{n}-u)\bigl>  \rightarrow 0$$
which implies the continuity of $S.$

We claim that there exists $R>0$ such that if $u = \theta S(u)$ with $\theta \in [0,1]$ then $|u|_{L^{p(x)}} < R.$ In fact, if $\theta =0$ then $u=0.$ Suppose that $\theta \neq 0$. In this case we have $S(u) = \frac{u}{\theta}$ and such equality implies the identity
$$\int_{\Omega}\bigl|\nabla \Big(\frac{u}{\theta}\Big)\bigl|^{p(x)-2}\nabla\Big(\frac{u}{\theta}\Big)\nabla\varphi=\int_{\Omega}H(Tu)\varphi  $$
for all $ \varphi \in W^{1,p(x)}_{0}(\Omega)$. Using the test function $\varphi = \frac{u}{\theta}$, the inequality \eqref{eq1.3} and the embedding $L^{p(x)}(\Omega) \hookrightarrow L^{1}(\Omega)$ we get
$$\int_{\Omega} \left|\nabla \left(\frac{u}{\theta}\right)\right|^{p(x)}  \leq K_0 \int_{\Omega} \frac{|u|}{\theta} \leq \frac{C}{\theta} |u|_{L^{p(x)}(\Omega)}$$
where $C>0$ is a constant that does not depend on $u$ and $\theta$. If $|\nabla u|_{L^{p(x)}} >1$ we have by the Poincar\'{e} inequality and Proposition \ref{norm-property} that $| u|^{p_{-} -1}_{L^{p(x)}} \leq \theta ^{p^{-}-1}C$ where $C$ is a constant that does not depend on $u$  and $\theta.$

Since $\theta \in (0,1]$ by Schaefer's fixed Point Theorem there exists $u \in L^{p(x)}(\Omega)$ such that $u = S(u).$ Thus
\begin{equation}\label{eq1.4}
\int_{\Omega}|\nabla u|^{p(x)-2}\nabla u\nabla\varphi  =\int_{\Omega}\biggl(\frac{f(x,Tu)|Tu|_{L^{q(x)}}^{\alpha(x)}}{\mathcal{A}(x,|Tu|_{L^{r(x)}})}+\frac{g(x,Tu)|Tu|_{L^{s(x)}}^{\gamma(x)}}{\mathcal{A}(x,|Tu|_{L^{r(x)}})}\biggl)\varphi 
\end{equation}
for all $ \varphi \in W_{0}^{1,p(x)}(\Omega).$

We claim that $u \in [\underline{u}, \overline{u}].$ Considering $w = Tu$ in \eqref{eq1.1} and subtracting  from \eqref{eq1.4} we get
\begin{small}
\begin{align*}
\int_{\Omega}\bigl<|\nabla \underline{u}|^{p(x)-2}\nabla\underline{u}-|\nabla u|^{p(x)-2}\nabla u,\nabla\varphi\bigl>\displaystyle&\leq\int_{\Omega}\Big(\frac{f(x,\underline{u})|\underline{u}|_{L^{q(x)}}^{\alpha(x)}-f(x,Tu)|Tu|_{L^{q(x)}}^{\alpha(x)}}{\mathcal{A}(x,|Tu|_{L^{r(x)}})}\Big)\varphi\\
&+\int_{\Omega}\Big(\frac{g(x,\underline{u})|\underline{u}|_{L^{s(x)}}^{\gamma(x)}-g(x,Tu)|Tu|_{L^{s(x)}}^{\gamma(x)}}{\mathcal{A}(x,|Tu|_{L^{r(x)}})}\Big)\varphi
\end{align*}
\end{small}
for all $\varphi\in W_{0}^{1,p(x)}(\Omega)$ with $\varphi\geq0.$

Using the test function $\varphi := (\underline{u}-u)_{+} = \max\{\underline{u} - u,0\}$ and using  that $f,g\geq0$ in $[0,|\overline{u}|_{L^{\infty}}],$ $Tu=\underline{u}$ in $\bigl\{\underline{u}\geq u\bigl\}:=\bigl\{x\in\Omega:\underline{u}(x)\geq u(x)\bigl\}$  we get
\begin{small}
\begin{align*}
	\int_{\{\underline{u}\geq u\}}\bigl<|\nabla \underline{u}|^{p(x)-2}\nabla\underline{u}-|\nabla u|^{p(x)-2}\nabla u,\nabla(\underline{u}-u)\bigl>\displaystyle&\leq \int_{\{\underline{u}\geq u\}}\frac{f(x,\underline{u})
		(|\underline{u}|_{L^{q(x)}}^{\alpha(x)}-|Tu|_{L^{q(x)}}^{\alpha(x)})}{\mathcal{A}(x,|Tu|_{L^{r(x)}})}\varphi\\
	&+\int_{\{\underline{u}\geq u\}}\frac{g(x,\underline{u})
		(|\underline{u}|_{L^{s(x)}}^{\gamma(x)}-|Tu|_{L^{s(x)}}^{\gamma(x)})}{\mathcal{A}(x,|Tu|_{L^{r(x)}})}\varphi \\
	& \leq 0
\end{align*}
\end{small}
which implies that $\underline{u} \leq u$. A similar reasoning provides the inequality $u \leq \overline{u}.$
\end{proof}

\section{Applications}

The main goal of this section is to apply Theorem \ref{theorem-to-(P)} in some classes of nonlocal problems.

\subsection{A sublinear problem:}In this section we use Theorem \ref{theorem-to-(P)} to study the nonlocal problem
$$
\left \{
\begin{array}{rclcl}
-\mathcal{A}(x,|u|_{L^{r(x)}})\Delta_{p(x)}u &=&u^{\beta(x)}|u|_{L^{q(x)}}^{\alpha(x)} \ \mbox{in} \   \Omega, \\
u&=& 0\ \mbox{on}  \ \partial \Omega
\end{array}
\right.\leqno{(Ps)}
$$

The above problem in the case $p(x)\equiv2$  was considered  recently in  \cite{gel-giovany}. The  result of this section generalizes \cite[Theorem 3]{gel-giovany}.

\begin{theorem}\label{teo-sublinear} Suppose that $r,p,q,\alpha$ satisfy $(H_{0})$ and let  $\beta\in L^{\infty}(\Omega)$ be a nonnegative function. Consider also that 
	$\alpha^{+}+\beta^{+}<p^{-}-1$. Let  $a_{0}>0$ be a positive constant. Suppose that one of the conditions hold
\vspace{0.2cm}		
	
		\noindent{\bf $(A_{1})$}$\mathcal{A}(x,t)\geq a_{0}\;\text{in}\;\overline{\Omega}\times[0,\infty), $

\vspace{0.2cm}

\noindent{\bf $(A_{2})$} $0<\mathcal{A}(x,t)\leq a_{0}\;\text{in}\;\overline{\Omega}\times(0,\infty)$ and $\lim_{t \rightarrow + \infty}\mathcal{A}(x,t)=a_{\infty}>0$ uniformly in $\Omega.$ 

\vspace{0.2cm}	

Then $(Ps)$ has a positive solution.
\end{theorem}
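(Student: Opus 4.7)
The plan is to apply Theorem \ref{theorem-to-(P)} with $f(x,t)=t^{\beta(x)}$ and $g\equiv0$, so I need to exhibit a sub-super solution pair $(\underline{u},\overline{u})$ for $(Ps)$ with $\underline{u}>0$ a.e.\ in $\Omega$, verify $\underline{u}\leq\overline{u}$, and check that $\mathcal{A}$ is continuous and positive on $\overline{\Omega}\times[|\underline{u}|_{L^{r(x)}},|\overline{u}|_{L^{r(x)}}]$ in both cases $(A_1)$ and $(A_2)$. The last point is automatic once $\underline{u}>0$: under $(A_1)$ one uses $\mathcal{A}\geq a_0$ directly, while under $(A_2)$ the continuity and strict positivity of $\mathcal{A}$ on $\overline{\Omega}\times(0,\infty)$, together with the uniform limit $\mathcal{A}(x,t)\to a_\infty>0$, produce a positive lower bound for $\mathcal{A}$ along the compact interval even when $|\overline{u}|_{L^{r(x)}}$ is large.

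For the supersolution I would take $\overline{u}=z_\Lambda$, the unique $C^1(\overline{\Omega})$ solution from Lemma \ref{Fan} of $-\Delta_{p(x)}u=\Lambda$ in $\Omega$ with zero boundary data, for $\Lambda>0$ to be fixed large. Since $-\Delta_{p(x)}\overline{u}=\Lambda$ a.e., the supersolution condition reduces to the pointwise inequality
\[
\Lambda\;\geq\;\frac{z_\Lambda^{\beta(x)}\,|z_\Lambda|_{L^{q(x)}}^{\alpha(x)}}{\mathcal{A}(x,|w|_{L^{r(x)}})}\qquad\text{for every }w\in[\underline{u},\overline{u}].
\]
Lemma \ref{Fan} yields $|z_\Lambda|_{L^\infty}\leq C^{*}\Lambda^{1/(p^--1)}$ once $\Lambda\geq\rho_0$, and since $\Omega$ is bounded the embedding $L^\infty(\Omega)\hookrightarrow L^{q(x)}(\Omega)$ controls $|z_\Lambda|_{L^{q(x)}}$ by the same order. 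Combining with the lower bound on $\mathcal{A}$ from the previous paragraph, the right-hand side above is dominated by a constant multiple of $\Lambda^{(\alpha^++\beta^+)/(p^--1)}$; the hypothesis $\alpha^++\beta^+<p^--1$ makes the exponent strictly less than $1$, so the inequality holds for all $\Lambda$ sufficiently large.

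The subsolution is the main obstacle and the place where the arguments of \cite{gel-giovany} cannot be transplanted: the $p(x)$-Laplacian is not homogeneous (rescaling $z_1$ by a small $\epsilon$ injects a logarithmic factor coming from $\nabla(\epsilon^{p(x)-1})=\epsilon^{p(x)-1}\ln(\epsilon)\nabla p$) and it lacks a first eigenfunction. Following the strategy of \cite{Liu}, I would construct $\underline{u}\in W_{0}^{1,p(x)}(\Omega)\cap L^{\infty}(\Omega)$, positive in $\Omega$, satisfying
\[
-\Delta_{p(x)}\underline{u}\;\leq\;\frac{\underline{u}^{\beta(x)}\,|\underline{u}|_{L^{q(x)}}^{\alpha(x)}}{\mathcal{A}(x,|w|_{L^{r(x)}})}\qquad\text{a.e.\ in }\Omega,
\]
for every $w\in[\underline{u},\overline{u}]$, again exploiting the sublinearity $\alpha^++\beta^+<p^--1$ to leave room for the estimate. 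The ordering $\underline{u}\leq\overline{u}$ is then secured by Proposition \ref{PC} after enlarging $\Lambda$ if necessary, and Theorem \ref{theorem-to-(P)} delivers the desired positive weak solution $u\in[\underline{u},\overline{u}]$ of $(Ps)$.
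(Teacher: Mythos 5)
Your overall strategy coincides with the paper's: take $\overline{u}=z_\Lambda$ from Lemma \ref{Fan} with $\Lambda$ large (the exponent count $\tfrac{\alpha^{+}+\beta^{+}}{p^{-}-1}<1$ is exactly the paper's inequality \eqref{desig2-p-supsol}), and build the subsolution from the boundary-layer function of \cite{Liu}. But the proof has a genuine gap precisely at the point you yourself flag as ``the main obstacle'': you never actually construct $\underline{u}$. Saying ``following the strategy of \cite{Liu}, I would construct $\underline{u}$ satisfying $-\Delta_{p(x)}\underline{u}\leq\mathcal{A}^{-1}\underline{u}^{\beta(x)}|\underline{u}|_{L^{q(x)}}^{\alpha(x)}$'' restates the definition of a subsolution rather than producing one. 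The paper's proof spends most of its length on exactly this step: it takes the explicit $\phi=\phi(k,\sigma)$ built from $d(x)=\mathrm{dist}(x,\partial\Omega)$, computes $-\Delta_{p(x)}(\mu\phi)$ separately in the three regions $d<\sigma$, $\sigma<d<2\delta$, $2\delta<d$, makes the specific choices $\sigma=\tfrac{1}{k}\ln 2^{1/p^{+}}$ and $\mu=e^{-ak}$ with $a=\tfrac{p^{-}-1}{\max_{\overline{\Omega}}|\nabla p|+1}$, and then uses L'H\^{o}pital's rule to show
$\lim_{k\to\infty}\tilde{C}k^{p^{-}-1}e^{-ak(p^{-}-1-(\alpha^{+}+\beta^{+}))}\left|\ln(k e^{-ak})\right|=0$,
which is where the hypothesis $\alpha^{+}+\beta^{+}<p^{-}-1$ enters the subsolution side. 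Without some version of this computation the claim that a positive subsolution exists is unsupported, and it is the only nontrivial content of the theorem beyond the abstract Theorem \ref{theorem-to-(P)}.

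A second, smaller omission: for the subsolution inequality you need an \emph{upper} bound on $\mathcal{A}(x,|w|_{L^{r(x)}})$ (so that $1/\mathcal{A}$ is bounded below), not only the lower bound you discuss for the supersolution. Under $(A_1)$ no global upper bound is assumed, so the paper first fixes $\lambda$ (hence $z_\lambda$), sets $\mathcal{A}_\lambda:=\max\{\mathcal{A}(x,t):(x,t)\in\overline{\Omega}\times[0,|z_\lambda|_{L^{r(x)}}]\}$, and only then chooses $k$ for the subsolution; under $(A_2)$ the order is reversed, since there $a_0$ is already an upper bound but the lower bound $\mathcal{A}_k$ depends on $|\mu\phi|_{L^{r(x)}}$ and on the uniform limit at infinity. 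Your sketch gestures at the $(A_2)$ ordering but misses the $\mathcal{A}_\lambda$ upper bound needed in case $(A_1)$, and the resulting dependence of $k$ on $\lambda$.
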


\begin{proof} Suppose that $(A_1)$  holds, that is , $\mathcal{A}(x,t)\geq a_{0} $ in $\overline{\Omega} \times [0,+\infty)$. We will start by constructing  $\overline{u}$. Let $\lambda>0$ and consider  $z_{\lambda}\in W_{0}^{1,p(x)}(\Omega)\cap L^{\infty}(\Omega)$ the unique  solution of \eqref{probl-linear-lambda} where $\lambda$ will be chosen later. 
	
For $\lambda >0$ large by Lemma \ref{Fan} there is a constant $K>1$ that does not depend on $\lambda$  such that
\begin{equation}\label{desig1-p-supsol}
0<z_{\lambda}(x)\leq K\lambda^{\frac{1}{p^{-}-1}}\;\text{in}\;\Omega.
\end{equation}	
	
Since $\alpha^{+}+\beta^{+}<p^{-}-1$ we can choose $\lambda>1$ such that \eqref{desig1-p-supsol} occurs and
\begin{equation}\label{desig2-p-supsol}
\frac{1}{a_{0}}K^{\beta^{+}}\lambda^{\frac{\alpha^{+}+\beta^{+}}{p^{-}-1}}\max\{|K|_{L^{q(x)}}^{\alpha^{-}},|K|_{L^{q(x)}}^{\alpha^{+}}\}\leq\lambda.
\end{equation}	

By \eqref{desig1-p-supsol} and \eqref{desig2-p-supsol} we get $$\frac{1}{a_{0}}z_{\lambda}^{\beta(x)}|z_{\lambda}|_{L^{q(x)}}^{\alpha(x)}\leq   \lambda. $$
Therefore
\begin{equation*}
\begin{aligned}
\left\{\begin{array}{rcl}
-\Delta_{p(x)}z_{\lambda}&\geq&\dfrac{1}{\mathcal{A}(x,|w|_{L^{r(x)}})}z_{\lambda}^{\beta(x)}|z_{\lambda}|_{L^{q(x)}}^{\alpha(x)}\;\;\mbox{in}\;\;\Omega,\\
\vspace{.2cm}
z_{\lambda}&=&0\;\;\mbox{on}\;\;\partial\Omega
\end{array}
\right.
\end{aligned}
\end{equation*}
for all  $w\in L^{\infty}({\Omega}).$

Define $\mathcal{A}_{\lambda}:=\max\bigl\{\mathcal{A}(x,t):(x,t)\in\overline{\Omega}\times\bigl[0,|z_{\lambda}|_{L^{r(x)}}\bigl]\bigl\}.$ We have $$a_{0}\leq\mathcal{A}(x,|w|_{L^{r(x)}})\leq \mathcal{A}_{\lambda}\;\;\mbox{in}\;\Omega $$
for all $w \in [0,z_{\lambda}].$

Now we will construct $\underline{u}.$ Since $\partial \Omega$ is $C^2$ there is a constant $\delta >0$ such that $d \in C^{2}(\overline{\partial \Omega_{3 \delta}})$ and $|\nabla d(x)| \equiv 1$ where $d(x):= dist(x,\partial \Omega)$ and $\overline{\partial \Omega_{3 \delta}}:=\{x \in  \overline{\Omega}; d(x) \leq 3 \delta\}$.
From \cite[Page 12]{Liu} we have that  for  $\sigma \in (0, \delta)$ small  the function $\phi=\phi(k,\sigma)$ defined by


\begin{equation*}
\phi(x)=\left\{\begin{array}{lcl}
e^{kd(x)}-1 & \text{ if } & d(x)<\sigma,\\ 
e^{k\sigma}-1+\int_{\sigma}^{d(x)}ke^{k\sigma}\Big(\frac{2\delta-t}{2\delta-\sigma}\Big)^{\frac{2}{p^{-}-1}}dt & \text{ if } &\sigma\leq d(x)<2\delta,\\
e^{k\sigma}-1+\int_{\sigma}^{2\delta}ke^{k\sigma}\Big(\frac{2\delta-t}{2\delta-\sigma}\Big)^{\frac{2}{p^{-}-1}}dt & \text{ if } & 2\delta \leq d(x)
\end{array}
\right.
\end{equation*}


belongs to  $ C^{1}_{0}(\overline{\Omega})$ where $k>0$ is an arbitrary number. They also proved that
$$-\Delta_{p(x)}(\mu\phi)=\begin{cases}
-k(k\mu e^{kd(x)})^{p(x)\!-1}\Big[(p(x)\!\!-1)+(d(x)\!\!+\frac{\ln k\mu}{k})\nabla p(x)\nabla d(x)\\
+\frac{\Delta d(x)}{k}\Big] \;\; \mbox{ if}\quad d(x)<\sigma,\\
\Big\{\frac{1}{2\delta-\sigma}\frac{2(p(x)-1)}{p^{-}-1}\!-\!\Big(\frac{2\delta-d(x)}{2\delta-\sigma}\Big)\Big[\ln k\mu e^{k\sigma}\Big(\frac{2\delta-d(x)}{2\delta-\sigma}\Big)^{\frac{2}{p^{-}-1}}\nabla p(x)\nabla d(x)\\
+\Delta d(x)\Big]\Big\}
(k\mu e^{k\sigma})^{p(x)-1}\Big(\frac{2\delta-d(x)}{2\delta-\sigma}\Big)^{\frac{2(p(x)-1)}{p^{-}-1}-1}\;\; \mbox{ if}\quad \sigma < d(x)<2\delta,\\
0\;\; \mbox{ if}\quad 2\delta<d(x)
\end{cases}
$$
for all $\mu >0$.

Let $\sigma=\frac{1}{k}\ln 2^{\frac{1}{p^{+}}}$ and $\mu=e^{-ak}$ where $a=\frac{p^{-}-1}{\max_{\overline{\Omega}}|\nabla p|+1}.$ Then $e^{k\sigma}=2^{\frac{1}{p^{+}}}$ and $k\mu\leq1$ if $k>0$ is large. From  \cite[Page 12]{Liu} we have
$$-\Delta_{p(x)}(\mu\phi)\leq0<\frac{1}{\mathcal{A}_{\lambda}}(\mu\phi)^{\beta(x)}|\mu\phi|_{L^{q(x)}}^{\alpha(x)} \;\text{ if }\;d(x)<\delta\;\text{or}\;\;2\delta<d(x).$$
and
\begin{equation}\label{desi1-p-subsol}
-\Delta_{p(x)}(\mu\phi)\leq \tilde{C}(k\mu)^{p^{-}-1}|\ln k\mu| \;\text{ if }\;\sigma < d(x)<2\delta.
\end{equation}	

We claim that 
\begin{equation}\label{lhopital}
\displaystyle\lim_{k \rightarrow + \infty} \displaystyle\frac{\tilde{C} k^{p_{-}-1}}{e^{ak(p^{-}-1 - (\alpha^{+}  + \beta^{+}))}} \left| \ln \displaystyle\frac{k}{e^{ak}}\right| = 0.
\end{equation}
In fact, note that 
$$\displaystyle\lim_{k \rightarrow + \infty} \displaystyle\frac{\tilde{C} k^{p_{-}-1}}{e^{ak(p^{-}-1 - (\alpha^{+}  + \beta^{+}))}} \left| \ln \displaystyle\frac{k}{e^{ak}}\right| = \displaystyle\lim_{k \rightarrow + \infty} \displaystyle\frac{f(k)}{g(k)} $$
where  
$$f(k) = \left| \ln \frac{k}{e^{ak}}\right| \ \text{and} \  g(k) = \displaystyle\frac{e^{ak(p^{-} -1 - (\alpha^{+} + \beta^{+}))}}{\tilde{C}k^{p^{-}-1}}, k >0.$$
Note that $f^{'}(k) = a - \frac{1}{k}$ for $k>0$ large which implies that $\displaystyle\lim_{k \rightarrow + \infty }f^{'}(k)  =a.$ Observe that

$$ g^{'}(k)= \displaystyle\frac{e^{ak(p^{-} -1 - (\alpha^{+} + \beta^{+}))}}{\tilde{C} k^{p^{-}-1}} \left( a(p^{-} -1 - (\alpha^{+} + \beta^{+}) - \displaystyle\frac{p^{-}-1}{k})\right)$$
and note also that $\displaystyle\lim_{k \rightarrow + \infty }g^{'}(k)  = + \infty$ because $\alpha^{+} + \beta^{+}  < p^{-} -1.$ Thus by L'Hospital's rule we have the claim.

If $\sigma\leq d(x)<2\delta$    we have $\phi(x)\geq 2^{\frac{1}{p^{+}}}-1$ for all $k>0$ because $e^{k \sigma} = 2^{\frac{1}{p^{+}}}$.Thus, there is a constant $C_{0}>0$ that does not depend on $k$ such that $|\phi|_{L^{q(x)}}^{\alpha(x)}\geq C_{0}\;\text{if}\;\sigma\leq d(x)<2\delta.$ By \eqref{lhopital} we can choose  $k>0$ large enough such that
\begin{equation}\label{desig2-p-subsol}
\frac{C_{1}k^{p^{-}-1}}{e^{ak[(p^{-}-1)-(\alpha^{+}+\beta^{+})]}}\Big|\ln\frac{k}{e^{ak}}\Big|\leq\frac{C_{0}}{\mathcal{A}_{\lambda}}(2^{\frac{1}{p^{+}}}-1)^{\beta^{+}}.
\end{equation}

It is possible to choose $k>0 $ large such that $\mu \phi (x) \leq 1$ for all $x \in \Omega$ that satisfies $\sigma < d(x) < \delta.$ Therefore from \eqref{desi1-p-subsol} and \eqref{desig2-p-subsol}  we have
$$-\Delta_{p(x)}(\mu\phi)\leq \frac{1}{\mathcal{A}_{\lambda}}(\mu\phi)^{\beta(x)}|\mu\phi|_{L^{q(x)}}^{\alpha(x)}\;\;\text{if}\;\;\sigma <d(x)<2\delta$$
for $k>0 $ large enough.
Fix $k>0$ satisfying the above property and the inequality $- \Delta_{p(x)} (\mu \phi) \leq 1.$ For $\lambda >1 $ we have $- \Delta_{p(x)} (\mu \phi) \leq  - \Delta_{p(x)} z_{\lambda}$. Therefore $\mu \phi \leq z_{\lambda}.$ The first part of the result is proved.

Now suppose that $0<\mathcal{A}(x,t)\leq a_{0}$ in $\overline{\Omega}\times(0,\infty).$ Let $\delta, \sigma, \mu, a, \lambda, z_{\lambda}$ and $\phi$ as before. From the previous arguments there exist $k>0$ large enough and  $\mu>0$ small such that
$$-\Delta_{p(x)}(\mu\phi)\leq1 \;\;\text{ and}\;\;-\Delta_{p(x)}(\mu\phi)\leq \frac{1}{a_{0}}(\mu\phi)^{\beta(x)}|\mu\phi|_{L^{q(x)}}^{\alpha(x)}\;\;\text{in}\;\Omega.$$
In particular for $ w\in L^{\infty}(\Omega)$ with $\mu\phi\leq w$  we have
\begin{equation}\label{aux1}
-\Delta_{p(x)}(\mu\phi)\leq \frac{1}{\mathcal{A}(x,|w|_{L^{r(x)}})}(\mu\phi)^{\beta(x)}|\mu\phi|_{L^{q(x)}}^{\alpha(x)}\;\;\text{in}\;\Omega.
\end{equation}

Since $\lim_{t\rightarrow\infty}\mathcal{A}(x,t)=a_{\infty}>0$ uniformly in $\Omega$ there is a constant  $a_{1}>0$ such that $\mathcal{A}(x,t)\geq\frac{a_{\infty}}{2}$ em $\overline{\Omega}\times(a_{1},\infty).$ Let $m_{k}:=\min\big\{\mathcal{A}(x,t):\overline{\Omega}\times[|\mu\phi|_{L^{r(x)}}, a_{1}] \big\}>0$ and $\mathcal{A}_{k}:=\min\big\{m_k,\frac{a_{\infty}}{2}\big\}$ then 
$\mathcal{A}(x,t)\geq\mathcal{A}_{k}\; \text{in}\; \overline{\Omega}\times[|\mu\phi|_{L^{r(x)}},\infty).$

Fix $k>0$ satisfying \eqref{aux1}. Let $\lambda>1$ such that  \eqref{desig1-p-supsol} ocurrs and $$\frac{1}{\mathcal{A}_{k}}K^{\beta^{+}}\lambda^{\frac{\alpha^{+}+\beta^{+}}{p^{-}-1}}\max\{|K|_{L^{q(x)}}^{\alpha^{-}},|K|_{L^{q(x)}}^{\alpha^{+}}\}\leq\lambda$$
where $K>1$ is a constant that does not depend on $k$ and $\lambda$ (see Lemma \ref{Fan}).  Thus for all $w\in[\mu\phi,z_{\lambda}]$ we have
$$-\Delta_{p(x)}z_{\lambda}\leq \frac{1}{\mathcal{A}(x,|w|_{L^{r(x)}})}z_{\lambda}^{\beta(x)}|z_{\lambda}|_{L^{q(x)}}^{\alpha(x)}\; \text{in}\; \Omega.$$
From the weak comparison principle we have $\mu \phi \leq z_{\lambda}.$ Therefore $(\mu \phi, z_{\lambda})$ is a sub-supersolution pair for $(P_s).$	
\end{proof}

\subsection{A concave-convex problem:}

In this section we consider the following nonlocal problem with concave-convex nonlinearities
$$
\left\{\begin{array}{rcl}
-\mathcal{A}(x,|u|_{L^{r(x)}})\Delta_{p(x)} u &=&\lambda |u|^{\beta(x)-1}u|u|_{L^{q(x)}}^{\alpha(x)}+\theta |u|^{\eta(x)-1}u|u|_{L^{s(x)}}^{\gamma(x)} \;\;\mbox{in}\;\;\Omega,\\
\vspace{.2cm}
u&=&0\;\;\mbox{on}\;\;\partial\Omega.
\end{array}
\right.\leqno{(P)_{\lambda,\theta}}
$$

The local version of $(P)_{\lambda,\theta}$ with $p(x)\equiv2$  and constant exponents was considered in the famous paper by Ambrosetti-Brezis-Cerami \cite{abc} by using a sub-super argument. In \cite{gel-giovany} the problem $(P)_{\lambda,\theta}$ was studied with $p(x)\equiv2.$ The following result generalizes \cite[Theorem 4]{gel-giovany}.

\begin{theorem}Suppose that $r,p,q,s,\alpha$ and $\gamma$ satisfy $(H_{0})$ and  $\beta,\eta\in L^{\infty}(\Omega)$  are nonnegative functions with $0<\alpha^{-}+\beta^{-}\leq\alpha^{+}+\beta^{+}<p^{-}-1$. Let $a_0,b_0>0$ positive numbers. The following assertions hold
	
\vspace{0.2cm}	

	\noindent{\bf $(A_{1})$} If $p^{+}-1<\eta^{-}+\gamma^{-}$ and $\mathcal{A}(x,t)\geq a_{0}\;\text{in}\;\overline{\Omega}\times [0, b_{0}]$ then given $\theta>0$ there exists $\lambda_{0}>0$ such that for each $\lambda\in(0,\lambda_{0})$ the  problem $(P)_{\lambda,\theta}$ has a positive solution $u_{\lambda,\theta}.$
	
\vspace{0.2cm}
	
	\noindent{\bf $(A_{2})$} If $p^{-}-1<\eta^{+}+\gamma^{+}$ and $0<\mathcal{A}(x,t)\leq a_{0}\;\mbox{in}\;\; \overline{\Omega}\times(0,\infty)$ and $\lim_{t\rightarrow\infty}\mathcal{A}(x,t)=b_{0}\;\text{uniformly in }\;\overline{\Omega}$
	then given $\lambda>0$ there exists $\theta_{0}>0$ such that for each $\theta\in(0,\theta_{0})$ the problem  $(P)_{\lambda,\theta}$ has a positive solution $u_{\lambda,\theta}.$
\end{theorem}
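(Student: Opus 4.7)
The plan is to apply Theorem \ref{theorem-to-(P)} with $f(x,u)=\lambda|u|^{\beta(x)-1}u$ and $g(x,u)=\theta|u|^{\eta(x)-1}u$, both of which are continuous and nonnegative on $\overline{\Omega}\times[0,\infty)$. In each case I look for a sub-super solution pair of the form $(\underline{u},\overline{u})=(\mu\phi,z_\Lambda)$, where $z_\Lambda\in W_0^{1,p(x)}(\Omega)$ solves $-\Delta_{p(x)}z_\Lambda=\Lambda$ as in Lemma \ref{Fan} for a suitable $\Lambda>0$, and $\mu\phi$ is the barrier of \cite{Liu} already used in Theorem \ref{teo-sublinear}. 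Because $g(x,\underline{u})|\underline{u}|_{L^{s(x)}}^{\gamma(x)}\geq 0$, the subsolution inequality reduces to
$$-\Delta_{p(x)}(\mu\phi)\leq \frac{\lambda(\mu\phi)^{\beta(x)}|\mu\phi|_{L^{q(x)}}^{\alpha(x)}}{\mathcal{A}(x,|w|_{L^{r(x)}})},\quad w\in[\mu\phi,z_\Lambda],$$
which I verify exactly as in Theorem \ref{teo-sublinear} once I use the continuity and strict positivity of $\mathcal{A}$ on the compact cylinder $\overline{\Omega}\times[|\mu\phi|_{L^{r(x)}},|z_\Lambda|_{L^{r(x)}}]$ to supply a uniform upper bound $\mathcal{A}^{*}$ playing the role of $a_0$ there. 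Taking $\mu$ small enough that in addition $-\Delta_{p(x)}(\mu\phi)\leq\Lambda$, Proposition \ref{PC} yields $\mu\phi\leq z_\Lambda$ and the ordering is in place.

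The heart of the argument is the supersolution inequality
$$\Lambda\;\geq\;\frac{1}{\mathcal{A}(x,|w|_{L^{r(x)}})}\Bigl[\lambda z_\Lambda^{\beta(x)}|z_\Lambda|_{L^{q(x)}}^{\alpha(x)}+\theta z_\Lambda^{\eta(x)}|z_\Lambda|_{L^{s(x)}}^{\gamma(x)}\Bigr],\quad w\in[\mu\phi,z_\Lambda].$$
Lemma \ref{Fan}, the continuous embedding $L^\infty\hookrightarrow L^{m(x)}$ for $|\Omega|<\infty$, and Proposition \ref{norm-property} yield estimates
$$z_\Lambda^{\beta(x)}|z_\Lambda|_{L^{q(x)}}^{\alpha(x)}\leq C_1\Lambda^{\sigma_1},\qquad z_\Lambda^{\eta(x)}|z_\Lambda|_{L^{s(x)}}^{\gamma(x)}\leq C_2\Lambda^{\sigma_2},$$
with $\sigma_1=(\alpha^-+\beta^-)/(p^+-1)$ and $\sigma_2=(\eta^-+\gamma^-)/(p^+-1)$ when $\Lambda<1$, and $\sigma_1=(\alpha^++\beta^+)/(p^--1)$ and $\sigma_2=(\eta^++\gamma^+)/(p^--1)$ when $\Lambda>1$.

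For case $(A_1)$ I work with $\Lambda<1$ small: the hypothesis $p^+-1<\eta^-+\gamma^-$ gives $\sigma_2>1$, while $\alpha^++\beta^+<p^--1\leq p^+-1$ gives $\sigma_1<1$. First pick $\Lambda>0$ small enough that $|z_\Lambda|_{L^{r(x)}}\leq b_0$ (so that $\mathcal{A}(x,|w|_{L^{r(x)}})\geq a_0$ on the relevant range) and
$$\frac{\theta C_2}{a_0}\Lambda^{\sigma_2-1}\leq\frac12,$$
which is possible since $\theta$ is fixed and $\sigma_2>1$. Then set $\lambda_0:=a_0\Lambda^{1-\sigma_1}/(2C_1)>0$; for $\lambda\in(0,\lambda_0)$ the two contributions sum to at most $a_0\Lambda$, yielding the supersolution inequality.

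Case $(A_2)$ is handled symmetrically with $\Lambda>1$ large: then $\sigma_1<1$ by $\alpha^++\beta^+<p^--1$ and $\sigma_2>1$ by $p^--1<\eta^++\gamma^+$. The lower bound for $\mathcal{A}$ is now built as in case $(A_2)$ of Theorem \ref{teo-sublinear}: fix $a_1>0$ such that $\mathcal{A}\geq b_0/2$ on $\overline{\Omega}\times[a_1,\infty)$, let $m_k$ be the positive minimum of $\mathcal{A}$ on $\overline{\Omega}\times[|\mu\phi|_{L^{r(x)}},a_1]$, and put $\mathcal{A}_{*}:=\min(m_k,b_0/2)$, obtaining a uniform positive lower bound independent of $\Lambda$. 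Next choose $\Lambda>1$ large enough that $\lambda C_1\Lambda^{\sigma_1-1}\leq\mathcal{A}_{*}/2$ (possible since $\sigma_1<1$), and define $\theta_0:=\mathcal{A}_{*}\Lambda^{1-\sigma_2}/(2C_2)$; for $\theta\in(0,\theta_0)$ the supersolution inequality follows. The main difficulty is the order of the parameter choices in $(A_2)$ -- since $\mathcal{A}_{*}$ depends on the already-chosen $\mu\phi$ -- which forces the bookkeeping to match the one carried out in Theorem \ref{teo-sublinear}: first select $k,\mu$, then $\mathcal{A}_{*}$, then $\Lambda$, and finally $\theta_0$.
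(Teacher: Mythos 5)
Your proof is correct and follows the paper's skeleton: the same sub--super pair $(\mu\phi,z_\Lambda)$, with $\phi$ the barrier from \cite{Liu} already used in Theorem \ref{teo-sublinear} and $z_\Lambda$ the solution of $-\Delta_{p(x)}z_\Lambda=\Lambda$ from Lemma \ref{Fan}, ordered by the weak comparison principle and fed into Theorem \ref{theorem-to-(P)}. The difference lies entirely in the parameter bookkeeping, and in both cases your version is a bit simpler. In $(A_1)$ the paper ties the supersolution to the problem's own parameter, taking $\Lambda=\lambda$, so the key inequality becomes $\lambda\geq\frac{1}{a_0}\bigl(\lambda^{1+(\alpha^-+\beta^-)/(p^+-1)}K^{\beta^+}\overline{K}+\theta\lambda^{(\eta^-+\gamma^-)/(p^+-1)}K^{\eta^+}\overline{K}\bigr)$ as $\lambda\to0^+$; this is where $\alpha^-+\beta^->0$ is genuinely used. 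You decouple $\Lambda$ from $\lambda$, fix $\Lambda$ small from $\theta$ alone, and then read off $\lambda_0$; this is equally valid and makes the hypothesis $\alpha^-+\beta^->0$ inessential for this step. In $(A_2)$ the paper minimizes $\Psi(t)=\frac{1}{\mathcal{A}_\lambda}\bigl(\lambda\overline{C}t^{\frac{\alpha^++\beta^+}{p^--1}-1}+\theta\overline{C}t^{\frac{\eta^++\gamma^+}{p^--1}-1}\bigr)$ explicitly, evaluates at the minimizer $M_{\lambda,\theta}$, and derives the smallness condition on $\theta$ from $\Psi(M_{\lambda,\theta})\leq1$; your sequential choice (first $\Lambda$ large to suppress the concave term, then $\theta_0$ from $\Lambda$) reaches the same conclusion with less computation, at the price of a less explicit, non-optimal $\theta_0$. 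The one point to tighten is in $(A_1)$: your upper bound $\mathcal{A}^{*}$ is taken over $\overline{\Omega}\times\bigl[|\mu\phi|_{L^{r(x)}},|z_\Lambda|_{L^{r(x)}}\bigr]$, a set that depends on the $\mu$ you are in the process of choosing; to avoid this circularity take the maximum over $\overline{\Omega}\times[0,b_0]$, as the paper does with $\mathcal{A}_0$, so the constant is fixed before $\mu$ is selected.
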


\begin{proof} Suppose that $(A_{1})$ occurs. Let $z_{\lambda}\in W_{0}^{1,p(x)}({\Omega})\bigcap L^{\infty}(\Omega)$ be the unique solution of \eqref{probl-linear-lambda} where $\lambda\in(0,1)$ will be chosen before.

Lemma \ref{Fan} implies that for $\lambda>0$ small enough there exists a constant $K>1$ that does not depend on $\lambda$ such that  
\begin{equation}\label{desig1-p-supsol-concavo}
0<z_{\lambda}(x)\leq K\lambda^{\frac{1}{p^{+}-1}}\;\text{in}\;\Omega.
\end{equation}

Let $\overline{K}:=\max\big\{|K|_{L^{q(x)}}^{\alpha^{+}},|K|_{L^{q(x)}}^{\alpha^{-}},|K|_{L^{s(x)}}^{\gamma^{+}},|K|_{L^{s(x)}}^{\gamma^{-}}\big\}.$  For each $\theta >0$ we can choose $0<\lambda_0<1 $ small enough, depending on $\theta,$  such that the inequalities
\begin{equation*}
\lambda\geq\dfrac{1}{a_{0}}\left(\lambda^{\frac{p^{+}-1+\beta^{-}+\alpha^{-}}{p^{+}-1}}K^{\beta^{+}}\overline{K} + \theta \lambda^{\frac{\eta^{-}+\gamma^{-}}{p^{+}-1}}K^{\eta^{+}}\overline{K}\right), \lambda \in (0,\lambda_0)
\end{equation*}
and \eqref{desig1-p-supsol-concavo} hold because $\alpha^{-} + \beta^{-} >0   $  and $p^{+} -1 < \eta^{-} + \gamma^{-}.$

There is  $\lambda_0>0$ small such that
\begin{align*}
\frac{1}{a_0}(\lambda z_{\lambda}^{\beta(x)} |z_\lambda|^{\alpha(x)}_{L^{q(x)}} + \theta z^{\eta(x)}_{\lambda}|z_\lambda|^{\gamma(x)}_{L^{s(x)}} )&\leq \lambda (K \lambda^{\frac{1}{p^{+}-1}})^{\beta(x)} |K \lambda^{\frac{1}{p^{+}-1}}|^{\alpha(x)}_{L^{q(x)}} \\
&+\theta (K \lambda^{\frac{1}{p^{+}-1}})^{\eta(x)}|K \lambda^{\frac{1}{p^{+}-1}}|^{\gamma (x)}_{L^{s(x)}} \\
&\leq \lambda.
\end{align*}
for all $\lambda \in (0,\lambda_0).$
Thus for $\lambda \in (0,\lambda_0)$ we get	
	
$$\frac{1}{a_0}(\lambda z_{\lambda}^{\beta(x)} |z_\lambda|^{\alpha(x)}_{L^{q(x)}} + \theta z^{\eta(x)}_{\lambda}|z_\lambda|^{\gamma(x)}_{L^{s(x)}}) \leq \lambda.	$$
If necessary consider a smaller value for $\lambda_0$ such that $|z_{\lambda}|_{L^{r(x)}} \leq |K|_{L^{r(x)}} \lambda^{\frac{1}{p^{+}-1}} \leq b_0.$ Thus for all $w \in [0, |z_\lambda|_{L^{r(x)}}]$ we have $\mathcal{A}(x,|w|_{L^{r(x)}}) \geq a_0.$ Therefore

\begin{equation}\label{subsol-p-concavo}
-\Delta_{p(x)}z_{\lambda}\geq\frac{1}{\mathcal{A}(x,|w|_{L^{r(x)}})}\left(\lambda z_{\lambda}^{\beta(x)}|z_{\lambda}|_{L^{q(x)}}^{\alpha(x)}+\theta z_{\lambda}^{\eta(x)}|z_{\lambda}|_{L^{s(x)}}^{\gamma(x)}\right)\;\mbox{in}\;\Omega\\
\end{equation}
for $\lambda \in (0,\lambda_0).$

Now consider $\phi, \delta,\sigma,\mu$ and $a$ as in the proof of Theorem \ref{teo-sublinear}. Fix $\lambda\in(0,\lambda_{0})$  such that \eqref{subsol-p-concavo} holds. Let $\mathcal{A}_{0}:=\max\big\{\mathcal{A}(x,t):(x,t)\in\overline{\Omega}\times[0,b_{0}]\big\}.$

Since $\alpha^{+}+\beta^{+}<p^{-}-1$  the arguments of the proof of Theorem \ref{teo-sublinear} implies that if $\mu=\mu(\lambda)>0$ is small enough then
$$-\Delta_{p(x)}(\mu\phi)\leq\lambda \ \text{in} \ \Omega$$
and
\begin{align*}
-\Delta_{p(x)}(\mu\phi)&\leq\frac{1}{\mathcal{A}_{0}}\lambda(\mu\phi)^{\beta(x)}|\mu\phi|_{L^{q(x)}}^{\alpha(x)}   \\
&\leq  \frac{1}{\mathcal{A}(x,|w|_{}L^{r(x)})}\lambda(\mu\phi)^{\beta(x)}|\mu\phi|_{L^{q(x)}}^{\alpha(x)}
\end{align*}
in $\Omega$ for all $w \in [0, |z_{\lambda}|_{L^{r(x)}}].$ The weak comparison principle implies that $\mu \phi \leq z_{\lambda}$  for $\mu >0$ small enough. Therefore $(\mu \phi, z_{\lambda})$ is a sub-super solution pair for $(P)_{\lambda, \theta}.$

Now we will prove the theorem in the second case. Consider again $\phi,\delta,\sigma,\mu$ and $a$ as in the proof of Theorem \ref{teo-sublinear}. Let $\lambda \in(0,\infty)$. Since $\alpha^{+}+\beta^{+}<p^{-}-1$  we can repeat the arguments of Theorem \ref{teo-sublinear} to obtain $\mu=\mu(\lambda)>0$ small depending only on $ \lambda$  such that

$$-\Delta_{p(x)}(\mu\phi)\leq 1\;\;\text{and}\;\;-\Delta_{p(x)}(\mu\phi)\leq\frac{\lambda}{a_{0}}(\mu\phi)^{\beta(x)}|\mu\phi|_{L^{q(x)}}^{\alpha(x)}\;\;\text{in}\;\Omega.$$

Let $z_{M}\in W_{0}^{1,p(x)}(\Omega)\cap L^{\infty}(\Omega)$ the unique solution of $\eqref{probl-linear-lambda}$  where  $M>0$ will be chosen later.

For $M\geq 1$ large enough there is a constant $K>1$ that does not depend on  $M$  such that
\begin{equation}\label{desig2-p-supsol-concavo}
0<z_{M}(x)\leq KM^{\frac{1}{p^{-}-1}}\;\text{in}\;\Omega.
\end{equation}

We want to obtain $M>1$ such that for each $w\in L^{\infty}(\Omega)$ with $\mu\phi\leq w$ the inequality
\begin{equation}\label{eqnova}
M\geq\frac{1}{\mathcal{A}(x,|w|_{L^{r(x)}})}\left(\lambda z_{M}^{\beta(x)}|z_{M}|_{L^{q(x)}}^{\alpha(x)}+\theta z_{M}^{\eta(x)}|z_{M}|_{L^{s(x)}}^{\gamma(x)}\right)\;\mbox{in}\;\Omega
\end{equation}
occurs.

Since $\mathcal{A}$ is continuous and $\displaystyle\lim_{t \rightarrow + \infty } \mathcal{A}(x,t) = b_0 >0$ uniformly in $\Omega$ there is a constant $a_1 >0$ such that $\mathcal{A}(x,t) \geq \frac{b_0}{2}$ in $\overline{\Omega} \times (a_1, + \infty)$. Consider 
$$m_{\lambda} = \min \{ \mathcal{A}(x,t): (x,t) \in \overline{\Omega} \times [|\mu \phi|_{L^{r(x)}}, a_1]\} >0$$
and $\mathcal{A}_{\lambda} = \min \{m_{\lambda}, \frac{b_0}{2}\}$. Then $\mathcal{A}(x,t) \geq \mathcal{A}_{\lambda}$ in $\overline{\Omega} \times [|\mu \phi|_{L^{r(x)}}, + \infty).$ Thus there exists a constant $\mathcal{A}_{\lambda} > 0 $ with $\mathcal{A}_{\lambda} \leq \mathcal{A} (x, |w|_{L^{r(x)}}) \leq a_0$ for all $w \in L^{\infty}(\Omega)$ with $\mu \phi \leq w$.

By \eqref{desig2-p-supsol-concavo} we have
\begin{equation}\label{equiv-rel}
\frac{(\lambda z^{\beta (x)}_{M} |z_{M}|^{\alpha (x)}_{L^{q(x)}} + \theta z^{\eta (x)}_{M} |z_{M}|^{\gamma (x)}_{L^{s(x)}})}{\mathcal{A} (x, |w|_{L^{r(x)}})} \leq \frac{(\lambda M^{\frac{\beta^{+} + \alpha^{+}}{p^{-}-1}}\overline{C} + \theta \overline{C} M^{\frac{\eta^{+} + \gamma^{+}}{p^{-}-1}} )}{\mathcal{A}_{\lambda}}
\end{equation}
with $\overline{C}= \max\{K^{\beta^+} \overline{K}, K^{\eta^+} \overline{K}\}$ where $\overline{K} = \max\{|K|^{\alpha^+}_{L^{q(x)}}, |K|^{\alpha^-}_{L^{q(x)}}, |K|^{\gamma^+}_{L^{s(x)}},|K|^{\gamma^-}_{L^{s(x)}}\}$.

Denoting by $I$  the right-hand side of \eqref{equiv-rel} we have $I \leq M$ if and only if

\begin{equation}\label{equiv-rel2}
1\geq\dfrac{1}{\mathcal{A}_{\lambda}}\left(\lambda \overline{C} M^{\frac{\beta^{+}+\alpha^{+}}{p^{-}-1}-1}+\theta \overline{C} M^{\frac{\eta^{+}+\gamma^{+}}{p^{-}-1}-1}\right).
\end{equation}
Since $\alpha^{+}+\beta^{+}<p^{-}-1<\eta^{+}+\gamma^{+}$ the function
$$\Psi(t) = \frac{\lambda \overline{C}t^{\frac{\alpha^+ + \beta^+}{p^{-}-1}-1} + \theta \overline{C}t^{\frac{\eta^+ + \gamma^+}{p^- -1}-1}}{\mathcal{A}_{\lambda}}  , t >0.$$
belongs to $C^{1}\big((0,\infty),\mathbb{R}\big)$ and attains a global minimum at

\begin{equation}\label{minimum}
M_{\lambda,\theta}:=M(\lambda,\theta)=L\Biggl(\dfrac{\lambda}{\theta}\Biggl)^{\frac{p^{-}-1}{(\eta^{+}+\gamma^{+})-(\beta^{+}+\alpha^{+})}}
\end{equation}
where $L=\left(\frac{(p^{-}-1)-(\beta^{+}+\alpha^{+})}{(\eta^{+}+\gamma^{+})-(p^{-}-1)}\right)^{\frac{p^{-}-1}{(\eta^{+}+\gamma^{+})-(\beta^{+}+\alpha^{+})}}.$
The inequality \eqref{equiv-rel2} is equivalent to find $M_{\lambda,\theta}>0$ such that $\Psi(M_{\lambda,\theta}) \leq 1.$ By \eqref{minimum} we have $\Psi(M_{\lambda,\theta}) \leq 1$ if and only if
$$\frac{\lambda \overline{C}P}{\mathcal{A}_{\lambda}}   \left( \frac{\lambda}{\theta}\right)^{\frac{ \alpha^{+} + \beta^{+}  - (p^{-} -1)}{\eta^{+} + \gamma^{+} - (\beta^{+}  +\alpha^{+})}} +\frac{\theta \overline{C}Q}{\mathcal{A}_{\lambda}}  \left( \frac{\lambda}{\theta}\right)^{\frac{\eta^{+} + \gamma^{+} - (p^{-} -1)}{\eta^{+} + \gamma^{+} - (\beta^{+}  +\alpha^{+})}} \leq 1$$
where $P=L^{\frac{\alpha^{+} \beta^{+} - (p^{-}-1)}{p^{-}-1}} $ and $Q = L^{\frac{\eta^{+} + \gamma^{+}}{p^{-}-1}}$. Notice that the above inequality holds if $\theta >0$ is small enough because $\alpha^{+}+\beta^{+}<p^{-}-1<\eta^{+}+\gamma^{+}$.
Thus for $\lambda >0$ fixed there exists $\theta_{0}= \theta_{0}(\lambda)$ such that for each $\theta \in (0,\theta_0)$ there is a number $M=M_{\lambda,\theta}>0$ such that \eqref{equiv-rel2} occurs. Consequently we have \eqref{eqnova}. Therefore
$$- \Delta_{p(x)} z_{M} \geq  \frac{1}{\mathcal{A}(x,|w|_{l^{r(x)}})} (\lambda z^{\beta(x)}_{M} |z_{M}|^{\alpha (x)}_{L^{q(x)}} + \theta z^{\eta(x)}_{M} |z_{M}|^{\gamma (x)}_{L^{s(x)}})  \ \text{in} \ \Omega.$$
Considering if necessary  a smaller $\theta_0 > 0$ we get $ M \geq 1$ . Therefore $-\Delta_{p(x)}(\mu \phi) \leq -\Delta_{p(x)}z_{M}$ in $\Omega.$  The weak comparison principle implies that $\mu \phi \leq z_M$. Then $(\mu \phi, z_M)$ is a sub-supersolution pair for $(P)_{\lambda,\theta}.$ The proof is finished.
\end{proof}

\subsection{A generalization of the logistic equation:}

In the previous sections we considered at least one of the conditions $\mathcal{A}(x,t)\geq a_{0}>0$ or $0<\mathcal{A}(x,t)\leq a_{\infty}, t>0.$ In this last section we study a generalization of the classic logistic equation where the function $\mathcal{A}(x,t)$  can satisfy
$$\mathcal{A}(x,0)\geq0,\;\;\;\lim_{t\rightarrow0^{+}}\mathcal{A}(x,t)=\infty
\;\;\;\mbox{and}\;\;\;\lim_{t\rightarrow + \infty}\mathcal{A}(x,t)=\pm\infty.$$

We will attack the problem
$$\left\{\begin{array}{rcl}
-\mathcal{A}(x,|u|_{L^{r(x)}})\Delta_{p(x)} u&=&\lambda f(u)|u|_{L^{q(x)}}^{\alpha(x)}\;\;\mbox{in}\;\;\Omega,\\
\vspace{.2cm}
u&=&0\;\;\mbox{on}\;\;\partial\Omega.
\end{array}
\right. \leqno{(P)_{\lambda}}$$

We will suppose that there is a number $\theta>0$ such that the function $f:[0,\infty)\rightarrow\mathbb{R}$ satisfies the conditions:

\noindent{\bf $(f_{1})$} $\;f\in C^{0}([0,\theta],\mathbb{R}).$

\noindent{\bf $(f_{2})$} $\;f(0)=f(\theta)=0,\;\;f(t)>0\; \text{in}\;(0,\theta).$

Problem  $(P)_{\lambda}$ is a generalization of the problems studied in  \cite{chipot-correa, chipot-roy, gel-giovany}. The next result generalizes \cite[Theorem 5]{gel-giovany}.

\begin{theorem}
	Suppose that  $r,p,q,\alpha$ satisfy $(H_{0}).$ Consider also that $f$ satisfies $(f_{1}), (f_{2})$ and that	$\mathcal{A}(x,t)>0$ in $\overline{\Omega}\times\bigl(0,|\theta|_{L^{r(x)}}\bigl].$ Then there exists $\lambda_{0}>0$ such that $\lambda\geq\lambda_{0},$ $(P)_{\lambda}$ has a positive solution $u_{\lambda}\in[0,\theta].$
\end{theorem}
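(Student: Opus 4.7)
The plan is to apply Theorem \ref{theorem-to-(P)} with the constant supersolution $\overline{u}\equiv\theta$ and the subsolution $\underline{u}=\mu\phi$ built from the same Liu-type barrier used in the proof of Theorem \ref{teo-sublinear}. The choice $\overline{u}=\theta$ is dictated by $(f_{2})$: since $-\Delta_{p(x)}\theta=0$ and $f(\theta)=0$, the supersolution inequality \eqref{eq1.2} holds trivially for every admissible $w$, and the constraint $w\in[\mu\phi,\theta]$ keeps the norm $|w|_{L^{r(x)}}$ inside the compact interval $[|\mu\phi|_{L^{r(x)}},|\theta|_{L^{r(x)}}]$ where $\mathcal{A}$ is, by hypothesis, strictly positive.

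For the subsolution I keep $\phi$, $\sigma=\frac{1}{k}\ln 2^{1/p^{+}}$, $a=(p^{-}-1)/(\max_{\overline{\Omega}}|\nabla p|+1)$ and $\mu=e^{-ak}$ exactly as in the proof of Theorem \ref{teo-sublinear}. Since $\phi$ grows at most linearly in $k$ while $\mu=e^{-ak}$ decays exponentially, for $k$ sufficiently large one has $0<\mu\phi\leq\theta$ throughout $\Omega$ and $k\mu\leq 1$. The identical pointwise computations then yield $-\Delta_{p(x)}(\mu\phi)\leq 0$ on $\{d(x)<\sigma\}\cup\{d(x)>2\delta\}$, and on the middle strip $\{\sigma\leq d(x)\leq 2\delta\}$ the bound $-\Delta_{p(x)}(\mu\phi)\leq \tilde{C}(k\mu)^{p^{-}-1}|\ln k\mu|$. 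On the first region the subsolution inequality \eqref{eq1.1} reduces to $0\leq(\text{nonnegative})$, which is automatic from $f\geq 0$ on $[0,\theta]$ and the positivity of $\mathcal{A}$.

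The substantive step is the middle strip. There $\phi\geq 2^{1/p^{+}}-1>0$ and $\phi$ is uniformly bounded above, so $\mu\phi$ takes values in a compact interval $[m_{1},m_{2}]\subset(0,\theta)$ once $k$ is fixed. By $(f_{1})$--$(f_{2})$ the continuous function $f$ admits a lower bound $f_{\min}>0$ on $[m_{1},m_{2}]$; the quantity $|\mu\phi|_{L^{q(x)}}^{\alpha(x)}$ is bounded below by a positive constant $c_{q}$ using Proposition \ref{norm-property}; and the continuity and positivity of $\mathcal{A}$ on the compact set $\overline{\Omega}\times[|\mu\phi|_{L^{r(x)}},|\theta|_{L^{r(x)}}]$ produce $\mathcal{A}_{\max}\geq\mathcal{A}(x,|w|_{L^{r(x)}})>0$ uniformly for $w\in[\mu\phi,\theta]$. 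Setting
\[
\lambda_{0}:=\frac{\mathcal{A}_{\max}\,\tilde{C}(k\mu)^{p^{-}-1}|\ln k\mu|}{f_{\min}\,c_{q}},
\]
the subsolution inequality holds on the middle strip for every $\lambda\geq\lambda_{0}$. Thus $(\mu\phi,\theta)$ is a sub-supersolution pair for $(P)_{\lambda}$ and Theorem \ref{theorem-to-(P)} delivers a positive weak solution $u_{\lambda}\in[\mu\phi,\theta]\subset(0,\theta]$.

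The main obstacle is precisely controlling the three factors in the quotient $f(\mu\phi)|\mu\phi|_{L^{q(x)}}^{\alpha(x)}/\mathcal{A}(x,|w|_{L^{r(x)}})$ on the middle strip: because $f$ vanishes at both endpoints of $[0,\theta]$ and $\mathcal{A}$ is only assumed positive (possibly blowing up or degenerating outside the prescribed compact interval), each factor must be pinned strictly away from $0$ and $+\infty$ by constants independent of $x$. Keeping $\mu\phi$ bounded below by $\mu(2^{1/p^{+}}-1)>0$ in the middle strip and above by $\theta$ throughout $\Omega$ is exactly what guarantees this uniform control and lets a single threshold $\lambda_{0}$ work.
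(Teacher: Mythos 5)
Your proof is correct, but it takes a genuinely different route from the paper. The paper does \emph{not} use the boundary--layer barrier $\mu\phi$ here: it builds the subsolution variationally, as a nontrivial global minimizer $z_{0}$ of the truncated functional $J_{\widetilde{\lambda}_{0}}(u)=\int_{\Omega}\frac{1}{p(x)}|\nabla u|^{p(x)}-\widetilde{\lambda}_{0}\int_{\Omega}\widetilde{F}(u)$, i.e.\ a solution of the local problem $-\Delta_{p(x)}z=\widetilde{\lambda}_{0}\widetilde{f}(z)$; positivity of $z_{0}$ comes from Proposition \ref{PC} together with $C^{1,\alpha}$ regularity, the bound $z_{0}\leq\theta$ from testing with $(z_{0}-\theta)^{+}$, and the subsolution inequality for $(P)_{\lambda}$ from the rescaling $\lambda\geq\widetilde{\lambda}_{0}\mu_{0}$ with $\mu_{0}=\mathcal{A}_{0}/C$. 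Your replacement of $z_{0}$ by the Liu barrier is exactly the alternative the authors flag in their closing Remark as requiring ``more restrictions on $p$ and $f$''; in fact your argument shows that no extra restrictions are needed for \emph{this} theorem, because $\lambda$ is a free large parameter: unlike in Theorem \ref{teo-sublinear}, where the middle--strip estimate $\tilde{C}(k\mu)^{p^{-}-1}|\ln k\mu|$ must be beaten by a quantity decaying like a power of $\mu$ as $k\to\infty$ (whence the condition $\alpha^{+}+\beta^{+}<p^{-}-1$ and the L'H\^{o}pital computation), here you fix $k$ once and for all and then absorb the fixed constant $\tilde{C}(k\mu)^{p^{-}-1}|\ln k\mu|$ into $\lambda_{0}$. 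The order of quantifiers is the crux and you do state it, but it deserves emphasis: $k$ (hence $\mu$, $f_{\min}$, $c_{q}$ and $\mathcal{A}_{\max}$) must be fixed \emph{before} $\lambda_{0}$ is chosen, since $f_{\min}\to 0$ and possibly $\mathcal{A}_{\max}\to\infty$ as $k\to\infty$ (the hypothesis allows $\mathcal{A}(x,t)\to\infty$ as $t\to 0^{+}$), so $\lambda_{0}$ necessarily depends on $k$. What each approach buys: yours is more elementary and self-contained (no minimization, no regularity theory for $z_{0}$, no comparison with $\theta$), while the paper's variational subsolution is independent of the barrier machinery and, as the authors note, avoids any latent dependence on the fine structure of $p$ and $f$ near the boundary.
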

\begin{proof}
Consider the function $\widetilde{f}(t)=f(t)$ for $t\in[0,\theta]$ and $\widetilde{f}(t)=0$ for $t\in\mathbb{R}\setminus[0,\theta].$ The functional
$$J_{\lambda}(u)=\int_{\Omega}\frac{1}{p(x)}|\nabla u|^{p(x)}dx-\lambda\int_{\Omega}\widetilde{F}(u)dx,\;u\in W_{0}^{1,p(x)}(\Omega),$$
where $\widetilde{F}(t)=\int_{0}^{t}\widetilde{f}(s)ds$ is of class $C^{1}(W_{0}^{1,p(x)}(\Omega),\mathbb{R}).$  Since $|\widetilde{f}(t)|\leq C$ for $t\in\mathbb{R}$ we have that $J$ is coercive. Thus $J$ has a minimum $z_{\lambda}$ which is a weak solution of the problem

$$\left\{\begin{array}{rcl}
-\Delta_{p(x)} z&=&\lambda \widetilde{f}(z)\;\;\mbox{in}\;\;\Omega,\\
\vspace{.2cm}
z&=&0\;\;\mbox{on}\;\;\partial\Omega.
\end{array}
\right.$$

Consider a function $\varphi_{0}\in W_{0}^{1,p(x)}(\Omega)$ such that $\widetilde{F}(\varphi_{0})>0.$ Define $z_{0}:=z_{\widetilde{\lambda}_{0}}$ where $\widetilde{\lambda}_{0}>0$ satisfy 
$$\int_{\Omega}\frac{1}{p(x)}|\nabla \varphi_{0}|^{p(x)} <\widetilde{\lambda}_{0}\int_{\Omega}\widetilde{F}(\varphi_{0}).$$

Thus $J_{\widetilde{\lambda}_{0}}(z_{0})\leq J_{\widetilde{\lambda}_{0}}(\varphi_{0})<0.$ Since $J_{\widetilde{\lambda}_{0}}(0)=0$ we have $z_{0}\neq0.$  By \cite[Theorem 4.1]{fan-zao} we have $z_{0}\in W_{0}^{1,p(x)}(\Omega)\cap L^{\infty}(\Omega)$ and using \cite[Theorem 1.2]{fan-regular} we obtain that $z_{0}\in C^{1,\alpha}(\overline{\Omega}).$ Considering the test function $\varphi=z_{0}^{-}:=\min\{z_{0},0\}$ we get  $z_{0}=z_{0}^{+}\geq0.$ By Proposition  \ref{PC} we have $z_{0}>0.$

Considering the test function $\varphi=(z_{0}-\theta)^{+}\in W_{0}^{1,p(x)}(\Omega)$ we have
$$\int_{\Omega}|\nabla z_{0}|^{p(x)-2}\nabla z_{0}\nabla(z_{0}-\theta)^{+}=\widetilde{\lambda}_{0}\int_{\{z_{0}>\theta\}}\widetilde{f}(z_{0})(z_{0}-\theta)=0.$$
Therefore
\begin{eqnarray*}
	\int_{\{z_{0}>\theta\}}\bigl<|\nabla z_{0}|^{p(x)-2}\nabla z_{0}-|\nabla\theta|^{p(x)-2}\nabla \theta,\nabla(z_{0}-\theta)\bigl>=0
\end{eqnarray*}
which implies  $(z_{0}-\theta)_{+}=0$ in $\Omega.$ Thus $0<z_{0}\leq\theta.$

Note that there is a constant $C>0$ such that $|z_{0}|_{L^{q(x)}}^{\alpha(x)}\geq C.$ Define $\mathcal{A}_{0}:=\max\big\{\mathcal{A}(x,t):(x,t)\in\overline{\Omega}\times[|z_{0}|_{L^{r(x)}},|\theta|_{L^{r(x)}}]\big\}$ and $\mu_{0}=\frac{\mathcal{A}_{0}}{C}.$ Then we have
$$-\Delta_{p(x)}z_{0}=\widetilde{\lambda}_{0} f(z_{0})=\frac{1}{\mathcal{A}_{0}}\widetilde{\lambda}_{0}\mu_{0}f(z_{0})|z_{0}|_{L^{q(x)}}^{\alpha(x)}\frac{\mathcal{A}_{0}}{\mu_{0}|z_{0}|_{L^{q(x)}}^{\alpha(x)}}\leq \frac{1}{\mathcal{A}_{0}}\widetilde{\lambda}_{0}\mu_{0}f(z_{0})|z_{0}|_{L^{q(x)}}^{\alpha(x)}.$$

Thus for each $\lambda\geq\widetilde{\lambda}_{0}\mu_{0}$ and $w\in[\varphi,\theta]$ we get
$$-\Delta_{p(x)}z_{0}\leq\frac{1}{\mathcal{A}(x,|w|_{L^{r(x)}})}\lambda f(z_{0})|z_{0}|_{L^{q(x)}}^{\alpha(x)}.$$

Since $f(\theta)=0$ it follows that  $(z_{0},\theta)$ is sub-solution pair for $(P)_{\lambda}$ and the result is proved.
\end{proof}

\begin{remark}
We would like to point that is possible to use the function $\phi$ from the proof of Theorem \ref{teo-sublinear} to consider  problem $(P)_{\lambda}$ but in order to do this  more restrictions on the functions $p$ and $f$  are needed.
\end{remark}


\end{document}